\newtheorem{theorem}{Theorem}[section]
\newtheorem{corollary}{Corollary}
\newtheorem{lemma}{Lemma}[section]
\newtheorem{conjecture}[theorem]{Conjecture}
\numberwithin{equation}{section}
\title{\bf Extremal graphs without long paths and \\a given graph\thanks {Research was partially supported by the National
Nature Science Foundation of China (grant number
12331012}}
\date{}
\author {Yichong Liu, \,  Liying Kang\thanks{\em Corresponding author. Email address: lykang@shu.edu.cn (L. Kang), lyc328az@163.com (Y. Liu)}\\
{\small Department of Mathematics, Shanghai University,
Shanghai 200444, P.R. China}}
\begin{document}

\maketitle

\begin{abstract}
   For a family of graphs $\mathcal{F}$, the Tur\'{a}n number $ex(n,\mathcal{F})$ is the maximum number of edges in an $n$-vertex graph containing no member of $\mathcal{F}$ as a subgraph. The maximum number of edges in an $n$-vertex  connected graph containing no member of $\mathcal{F}$ as a subgraph is denoted by $ex_{conn}(n,\mathcal{F})$. Let $P_k$ be the path on $k$ vertices and $H$ be a graph with chromatic number more than $2$. Katona and Xiao [Extremal graphs without long paths and large cliques, European J. Combin., 2023 103807] posed the following conjecture: Suppose that the chromatic number of $H$ is more than $2$. Then $ex\big(n,\{H,P_k\}\big)=n\max\big\{\big\lfloor \frac{k}{2}\big\rfloor-1,\frac{ex(k-1,H)}{k-1}\big\}+O_k(1)$.
   In this paper, we determine the exact value of $ex_{conn}\big(n,\{P_k,H\}\big)$ for sufficiently large $n$. Moreover, we obtain asymptotical result for $ex\big(n,\{P_k,H\}\big)$, which solves the conjecture proposed by Katona and Xiao.
   
   \bigskip

\noindent{\bf Keywords:} Tur\'{a}n number, extremal graph, chromatic number, path 
\medskip

\noindent{\bf AMS (2000) subject classification:}  05C35
\end{abstract}

\section{Introduction}
Let $G=(V,E)$ be a simple graph with vertex set $V=V(G)$ and edge set $E=E(G)$. The number of edges in $G$ is denoted by $e(G)$.
For $S\subseteq V(G)$, we denote by $N_G^{co}(S)$ the common neighborhood of vertices of $S$ in $V(G)$. Denote by $G[S]$ the graph induced by $S$, and denote by $G\setminus S$ the graph obtained from $G$ by deleting all vertices of $S$ and all edges incident with $S$. For $V_1,V_2\subseteq V(G)$, $E(V_1,V_2)$ denotes the set of edges between $V_1$ and $V_2$ in $G$, and $e(V_1,V_2)=|E(V_1,V_2)|$. The chromatic number of $G$ is denoted by $\chi(G)$.

Let $\mathcal{F}$ be a family of graphs. A graph $G$ is called $\mathcal{F}$-{\em free} if $G$ does not contain any member of the graphs in $\mathcal{F}$ as a subgraph. The {\em Tur\'{a}n number}, denoted by $ex(n,\mathcal{F})$, is the maximum number of edges in an $n$-vertex graph containing no member of $\mathcal{F}$ as a subgraph.
We call an $n$-vertex $\mathcal{F}$-free graph attaining $ex(n,\mathcal{F})$ edges an {\em extremal graph} for $\mathcal{F}$. The family of extremal graphs for $\mathcal{F}$ is denoted by $EX(n,\mathcal{F})$. The  maximum number of edges in an $n$-vertex  connected graph containing no member of $\mathcal{F}$ as a subgraph is denoted by $ex_{conn}(n,\mathcal{F})$, and the family of $n$-vertex $\mathcal{F}$-free connected graphs attaining $ex_{conn}(n,\mathcal{F})$ edges is denoted by $EX_{conn}(n,\mathcal{F})$.

Let $K_n$ denote the complete graph on $n$ vertices. Let $P_k$ and $C_k$ denote the path and the cycle on $k$ vertices respectively. The $n$-vertex independent set is denoted by $I_n$. The complete $k$-partite graph $K_{n_1,n_2,\cdots,n_k}$ is a graph formed by partitioning the set of $n$ vertices into $k$ subsets with $n_i(1\leq i\leq k)$ vertices in each subset, and connecting two vertices by an edge if and only if they belong to different subsets. The Tur\'{a}n graph $T(n, k)$ is a complete multipartite graph formed by partitioning the set of $n$ vertices into $k$ subsets with size as equal as possible. For two graphs $G$ and $H$, the disjoint union of $G$ and $H$ is denoted by $G\cup H$. The \textit{join} of $G$ and $H$, denoted by $G\vee H$, is the graph obtained from $G \cup H$ by adding all possible edges between $G$ and $H$.

In 1959, Erd{\H{o}}s and Gallai \cite{1959-erdos-p337} determined the Tur\'{a}n number for $P_k$.

\begin{theorem}[\cite{1959-erdos-p337}]\label{th_1}
   Let $G$ be an $n$-vertex graph. If $e(G)>\frac{k-2}{2}n$ where $k\geq 2$, then $G$ contains a copy of $P_k$.
\end{theorem}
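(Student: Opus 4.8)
The statement is the contrapositive of the assertion that every $P_k$-free graph $G$ on $n$ vertices satisfies $e(G)\le \frac{k-2}{2}n$, and the plan is to prove this bound by induction on $n$. For the base case $n\le k-1$ one simply notes $e(G)\le \binom{n}{2}=\frac{n(n-1)}{2}\le \frac{k-2}{2}n$, since $n-1\le k-2$. For the inductive step with $n\ge k$, I would split on the minimum degree $\delta(G)$. If some vertex $v$ has $\deg(v)\le \frac{k-2}{2}$, then $G-v$ is still $P_k$-free on $n-1$ vertices, so the induction hypothesis gives $e(G-v)\le \frac{k-2}{2}(n-1)$, and adding back $\deg(v)\le \frac{k-2}{2}$ yields $e(G)\le \frac{k-2}{2}n$, as desired.

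It remains to handle the case $\delta(G)>\frac{k-2}{2}$, so that $2\delta(G)\ge k-1$. Here I would argue componentwise. For a connected component $C$ of order at most $k-1$ the crude bound $e(C)\le \binom{|C|}{2}\le \frac{k-2}{2}|C|$ already suffices, and summing over all components recovers the target inequality. The real content is therefore to exploit any component $C$ with $|C|\ge k$: for such a component I want to show that the high minimum degree forces a path on at least $k$ vertices, contradicting $P_k$-freeness. This reduces everything to the following lemma: every connected graph $H$ with minimum degree $\delta$ contains a path on at least $\min\{|V(H)|,\,2\delta+1\}$ vertices. Applying it to $C$, where $2\delta+1\ge k$ and $|C|\ge k$, produces the forbidden $P_k$.

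The lemma, proved by a rotation argument, is the main obstacle. I would take a longest path $P=v_0v_1\cdots v_\ell$ in $H$; maximality forces all neighbors of both endpoints to lie on $P$. Writing $A=\{i:v_0v_i\in E\}$ and $B=\{i:v_\ell v_{i-1}\in E\}$, both are subsets of $\{1,\dots,\ell\}$ of size at least $\delta$, so if $\ell<2\delta$ then $|A|+|B|>\ell$ forces $A\cap B\ne\emptyset$; any $i\in A\cap B$ yields a cycle through all of $V(P)$ via the edges $v_0v_i$ and $v_{i-1}v_\ell$. If moreover $\ell+1<|V(H)|$, connectivity provides a vertex off $P$ adjacent to this cycle, which opens up into a path longer than $P$, a contradiction. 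Hence $\ell\ge 2\delta$ or $P$ is spanning, giving a path on at least $\min\{|V(H)|,2\delta+1\}$ vertices. The delicate points are getting the index bookkeeping in the rotation exactly right and checking that the counting indeed forces $A\cap B\ne\emptyset$; once the lemma is in hand, the remaining assembly of the cases is routine.
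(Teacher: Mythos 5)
The paper does not prove this statement at all: Theorem 1.1 is the classical Erd\H{o}s--Gallai bound, quoted from the 1959 paper as a known result, so there is no in-paper argument to compare against. Judged on its own, your proof is the standard and correct one. The contrapositive-plus-induction skeleton is sound: the base case $n\le k-1$ uses $n-1\le k-2$ correctly; deleting a vertex of degree at most $\frac{k-2}{2}$ preserves $P_k$-freeness and the arithmetic closes the induction; and in the remaining case $2\delta(G)\ge k-1$ the componentwise split is complete. The key lemma (a connected graph with minimum degree $\delta$ has a path on at least $\min\{|V(H)|,2\delta+1\}$ vertices) is the right tool, and your rotation argument for it is correct: $A,B\subseteq\{1,\dots,\ell\}$ each have size at least $\delta$, so $\ell<2\delta$ forces $|A\cap B|\ge 2\delta-\ell\ge 1$, an index $i\in A\cap B$ yields the cycle $v_0v_1\cdots v_{i-1}v_\ell v_{\ell-1}\cdots v_iv_0$ on all of $V(P)$, and connectivity then extends it to a longer path unless $P$ is spanning. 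The only points worth writing out carefully in a full version are the ones you already flag (the index bookkeeping, including the degenerate case $i=1$, and the observation that the minimum degree of a component is at least $\delta(G)$), none of which causes trouble. So: correct, complete in outline, and it supplies a proof the paper deliberately omits.
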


The extremal problem for $P_k$ under the condition that $G$ is connected was considered by Kopylov \cite{1977-kopylov-maximal}.
He determined the Tur\'{a}n number for $P_k$. After 30 years,
Balister, Gy{\H{o}}ri, Lehel and Schelp \cite{2008-balister-p4487} found all the extremal graphs for $P_k$.

\begin{theorem}[\cite{2008-balister-p4487}]\label{th_2}
   Let $G$ be a connected graph on $n$ vertices containing no path on $k$ vertices where $n>k\geq 4$. Then
\begin{equation*}
   e(G)\leq \max\bigg\{\binom{k-2}{2}+(n-k+2),\binom{\big\lceil\frac{k}{2} \big\rceil}{2}+\left\lfloor \frac{k-2}{2}\right\rfloor\bigg(n-\Big\lceil \frac{k}{2}\Big\rceil\bigg)\bigg\}.
\end{equation*}
The equality holds when $G$ is either $(K_{k-3}\cup I_{n-k+2})\vee K_1$ or $\big(K_{k-2\lfloor\frac{k}{2} \rfloor+1}\cup I_{n-\lceil \frac{k}{2}\rceil}\big)\vee K_{\lfloor \frac{k}{2}\rfloor-1}$.
\end{theorem}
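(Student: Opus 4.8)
The plan is to convert the path/connectivity problem into a cycle/$2$-connectivity problem by attaching a single dominating vertex, and then to invoke the cycle analogue of Theorem~\ref{th_1} due to Kopylov. Set $G'=G\vee K_1$ and call the new vertex $v^*$, so that $|V(G')|=n+1$ and $e(G')=e(G)+n$. I would first record two equivalences. Because $v^*$ is adjacent to every other vertex, deleting any one vertex of $G'$ leaves a graph still connected through $v^*$ (and deleting $v^*$ leaves the connected graph $G$); hence $G$ connected implies $G'$ is $2$-connected. Moreover $G$ contains $P_k$ if and only if $G'$ contains a cycle of length at least $k+1$: a copy $u_1\cdots u_k$ of $P_k$ closes to the cycle $u_1\cdots u_k v^* u_1$, while any cycle of length $\ge k+1$ in $G'$ contains, after deleting $v^*$ from it if present, a path on at least $k$ vertices of $G$. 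Thus ``$G$ connected and $P_k$-free'' is equivalent to ``$G'$ is $2$-connected with no cycle of length $\ge c:=k+1$.''

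Next I would apply Kopylov's bound for $2$-connected graphs of restricted circumference. Writing $f(N,c,a)=\binom{c-a}{2}+a(N-c+a)$ and $t=\lfloor(c-1)/2\rfloor$, Kopylov's theorem states that a $2$-connected graph on $N\ge c$ vertices with no cycle of length $\ge c$ has at most $\max\{f(N,c,2),f(N,c,t)\}$ edges, the extremal graphs being of the form $K_a\vee(K_{c-2a}\cup I_{N-c+a})$ with $a\in\{2,t\}$; the fact that only the two endpoints $a=2$ and $a=t$ occur reflects the convexity of $a\mapsto f(N,c,a)$ on the admissible range. Applying this with $N=n+1$, $c=k+1$, so $t=\lfloor k/2\rfloor$, and then using $e(G)=e(G')-n$, I would check by a direct computation (built on $\binom{k-1}{2}=\binom{k-2}{2}+(k-2)$ and $\binom{\lceil k/2\rceil+1}{2}=\binom{\lceil k/2\rceil}{2}+\lceil k/2\rceil$) that $f(n+1,k+1,2)-n=\binom{k-2}{2}+(n-k+2)$ and $f(n+1,k+1,t)-n=\binom{\lceil k/2\rceil}{2}+\lfloor\frac{k-2}{2}\rfloor(n-\lceil\frac{k}{2}\rceil)$, which are exactly the two quantities inside the maximum.

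Finally, to recover the extremal graphs I would trace the characterization back through the reduction: equality forces $G'$ to be one of $K_a\vee(K_{c-2a}\cup I)$ in which $v^*$ is a dominating vertex, i.e. $v^*$ lies in the kernel $K_a$, and deleting it leaves $K_{a-1}\vee(K_{c-2a}\cup I)$. For $a=2$ this is $(K_{k-3}\cup I_{n-k+2})\vee K_1$ and for $a=t=\lfloor k/2\rfloor$ it is $\big(K_{k-2\lfloor k/2\rfloor+1}\cup I_{n-\lceil k/2\rceil}\big)\vee K_{\lfloor k/2\rfloor-1}$, precisely the two stated families. The main obstacle is Kopylov's $2$-connected estimate together with its equality case: if it may be quoted the argument is short, but otherwise one must carry out Kopylov's disintegration---repeatedly deleting vertices of degree less than $t$ and arguing that either the process exhausts the graph (forcing the bound in which the linear term $a(N-c+a)$ dominates) or halts at a dense remainder that is essentially $K_{c-t}$ (forcing the clique term)---and then do the delicate bookkeeping needed to identify \emph{all} extremal graphs and to confirm that $v^*$ may always be taken in the kernel.
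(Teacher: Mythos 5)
This statement is quoted in the paper from Balister--Gy\H{o}ri--Lehel--Schelp (reference [2]) without proof, so there is no in-paper argument to compare against; I can only assess your proposal on its own terms, and it is correct. The reduction you use --- pass to $G'=G\vee K_1$, observe that $G'$ is $2$-connected and that $G$ contains $P_k$ if and only if $G'$ contains a cycle of length at least $k+1$, then invoke Kopylov's theorem for $2$-connected graphs without long cycles --- is in fact the classical route by which Kopylov derived the path version, so you are reproving the cited result in essentially the standard way. The one point that deserves emphasis is your choice of the parameter $t$: for the hypothesis ``no cycle of length $\ge c$'' the correct kernel size is $t=\lfloor (c-1)/2\rfloor$ (as you wrote), not $\lfloor c/2\rfloor$; with $c=k+1$ this gives $t=\lfloor k/2\rfloor$, and the wrong convention would inflate the second term of the maximum by $n-\lceil k/2\rceil$ when $k$ is odd and fail to match the stated bound. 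I checked your two identities: $f(n+1,k+1,2)-n=\binom{k-1}{2}+2(n-k+2)-n=\binom{k-2}{2}+(n-k+2)$ and $f(n+1,k+1,\lfloor k/2\rfloor)-n=\binom{\lceil k/2\rceil+1}{2}+\lfloor k/2\rfloor(n-\lceil k/2\rceil)-n=\binom{\lceil k/2\rceil}{2}+\lfloor\frac{k-2}{2}\rfloor(n-\lceil k/2\rceil)$, and both are right, as is the identification of the extremal graphs as $K_{a-1}\vee(K_{k+1-2a}\cup I_{n+1-(k+1)+a})$ for $a\in\{2,\lfloor k/2\rfloor\}$. Note also that the theorem as stated only asserts that equality \emph{holds for} the two named graphs, so for that direction you do not actually need the full equality characterization in Kopylov's theorem --- a direct check that these two graphs are connected, $P_k$-free, and attain the respective values suffices; the heavier bookkeeping you describe is only needed if one wants the complete list of extremal graphs as in the BGLS paper.
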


Recently, Katona and Xiao \cite{2023-Katona-withoutlongpath} determined the exact value of $ex(n,\{P_k,K_m\})$ if $k>2m-1$ and $ex_{conn}(n,\{P_k,K_m\})$ if $k>m$ for sufficiently large $n$.

\begin{theorem}[\cite{2023-Katona-withoutlongpath}]\label{th_3}
   Let $G$ be a connected $n$-vertex $\{K_m,P_k\}$-free graph where $m<k$. For sufficiently large $n\, (>N(k))$,
\begin{equation*}
ex_{conn}(n,\{K_m,P_k\})=\bigg(\Big\lfloor\frac{k}{2} \Big\rfloor-1\bigg)\bigg(n-\Big\lfloor\frac{k}{2} \Big\rfloor+1\bigg)+e\bigg(T\bigg(\Big\lfloor \frac{k}{2}\Big\rfloor-1, m-2\bigg)\bigg).
\end{equation*}
$T\big(\lfloor \frac{k}{2}\rfloor-1, m-2\big)\vee I_{n-\lfloor\frac{k}{2} \rfloor+1}$ is an  extremal graph.
\end{theorem}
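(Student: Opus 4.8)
The plan is to prove matching lower and upper bounds, writing $t:=\lfloor k/2\rfloor-1$ throughout. For the lower bound I would verify that $G_0=T(t,m-2)\vee I_{n-t}$ is admissible. It is connected; it is $K_m$-free because it is complete multipartite with $\omega(G_0)=\omega(T(t,m-2))+1\le(m-2)+1=m-1<m$ (one vertex of $I_{n-t}$ extends a largest clique of the core by exactly one); and it is $P_k$-free because deleting the $t$ core vertices leaves the independent set $I_{n-t}$, so on any path the $I_{n-t}$-vertices are separated by core vertices and a longest path has at most $2t+1=2\lfloor k/2\rfloor-1\le k-1$ vertices. Counting edges gives $e(G_0)=t(n-t)+e(T(t,m-2))$, the claimed value.

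For the upper bound, let $G$ be a connected $\{K_m,P_k\}$-free graph on $n$ vertices; by the lower bound I may assume $e(G)\ge t(n-t)+e(T(t,m-2))$. The first step is to locate the $t$ dominating vertices. Set $B=\{v\in V(G):\deg(v)\ge (n+3k)/2\}$, so that any two vertices of $B$ have at least $\deg(b)+\deg(b')-n\ge 3k$ common neighbours. I claim $|B|\le t$: if $|B|\ge t+1$, choose distinct $b_1,\dots,b_{t+1}\in B$ and greedily pick distinct vertices $v_0,\dots,v_{t+1}$ with $v_i\in N(b_i)\cap N(b_{i+1})$ for $1\le i\le t$, $v_0\in N(b_1)$, $v_{t+1}\in N(b_{t+1})$ (each choice avoids at most $2t+2<3k$ used vertices, so a fresh one always exists). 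This yields a path $v_0b_1v_1b_2\cdots b_{t+1}v_{t+1}$ on $2t+3=2\lfloor k/2\rfloor+1\ge k$ vertices, hence a $P_k$, a contradiction. So $B$ has at most $t$ vertices, and writing $S=V(G)\setminus B$ we have $e(G)=e(G[B])+e(B,S)+e(G[S])$ with $|B|\le t$.

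The second, and main, step is to show that $G[S]$ is essentially edgeless and that the count then sharpens to the exact constant. Supposing for the moment that $S$ is independent and $|B|=t$, the key is a Tur\'an-type trade-off driven by $K_m$-freeness: if $\omega(G[B])\le m-2$ then $e(G[B])\le e(T(t,m-2))$ by Tur\'an and $e(B,S)\le t|S|$; while if $G[B]$ contains a $K_{m-1}$ on a set $Q$, then no vertex of $S$ can be adjacent to all of $Q$ (else $K_m$), so $e(B,S)\le(t-1)|S|$ and $e(G[B])\le e(T(t,m-1))$, giving $e(G[B])+e(B,S)\le e(T(t,m-1))+(t-1)|S|\le e(T(t,m-2))+t|S|$ since the surplus $e(T(t,m-1))-e(T(t,m-2))=O_k(1)$ is absorbed by $|S|\ge n-t$. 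Either way $e(G[B])+e(B,S)\le e(T(t,m-2))+t|S|$; combined with $b(n-b)\le t(n-t)$ in the case $|B|=b<t$, this produces the claimed bound.

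The hard part is justifying the structural hypotheses of the last step: ruling out vertices of intermediate degree and proving that $G[S]$ carries (almost) no edges. A degree threshold alone does not suffice, since $G[S]$ is only known to be $P_k$-free and Erd\H{o}s--Gallai (Theorem~\ref{th_1}) permits it up to roughly $t|S|$ edges, which would double the intended budget. I would therefore combine the edge-maximality of $G$ with the detailed structure of dense connected $P_k$-free graphs (Theorem~\ref{th_2}), i.e.\ a stability version of it, to force all but $t$ vertices to send their edges into $B$, and then remove the surviving edges of $G[S]$ by the same path-building as above. The most delicate point, where $K_m$-freeness is indispensable, is the parity case $k$ odd: a single edge inside $S$ is consistent with $P_k$-freeness (it only creates a $P_{k-1}$), so the path constraint alone would allow one extra edge, and it is precisely $K_m$-freeness that forbids it, since such an edge together with a largest clique of the fully dominating core $B$ completes a $K_m$. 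Handling this interaction uniformly—a single count that plays the $K_m$-penalty against the $P_k$-penalty for every stray edge—is the crux, with large $n$ used throughout to absorb the $O_k(1)$ correction terms.
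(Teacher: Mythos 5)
Your lower bound and the extraction of the set $B$ of at most $\lfloor k/2\rfloor-1$ high-degree vertices are fine, but the proof has a genuine gap exactly where you yourself locate ``the crux'': the step that forces $S=V(G)\setminus B$ to be (almost) independent and disposes of vertices of intermediate degree is never carried out, only gestured at via ``a stability version of Theorem~\ref{th_2}''. Since Erd\H{o}s--Gallai allows $G[S]$ up to roughly $(\lfloor k/2\rfloor-1)|S|$ edges, and since nothing so far controls vertices of degree between, say, $k$ and $(n+3k)/2$, the decomposition $e(G)=e(G[B])+e(B,S)+e(G[S])$ does not yet yield anything close to the claimed bound; essentially all of the difficulty of the theorem sits in this unproved step. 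Worse, the specific mechanism you propose for the delicate odd-$k$ case is wrong in part of the stated range: you argue that a stray edge $uv$ inside $S$ together with ``a largest clique of the fully dominating core $B$'' completes a $K_m$, but when $k$ is odd and $m<k<2m-1$ the core has only $t=(k-3)/2\le m-3$ vertices, so $K_t$ plus the edge $uv$ gives a clique on at most $m-1$ vertices and no $K_m$ arises. In fact the statement as quoted fails by exactly $1$ in that regime: the paper's Corollary~1 exhibits $K_{(k-3)/2}\vee\big(I_{n-(k-1)/2}\cup K_2\big)$ as a connected $\{K_m,P_k\}$-free graph with one more edge than the formula of Theorem~\ref{th_3}, so no completion of your argument can recover the stated equality there.

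For comparison, the paper does not prove Theorem~\ref{th_3} by a degree decomposition at all: it deduces it (for even $k$, and for odd $k\ge 2m-1$) from the general Theorems~\ref{theorem_1} and~\ref{theorem_4} with $H=K_m$, whose proofs run through a longest path $P_{k-1}$, the partition of $V(G)\setminus V(P_{k-1})$ into classes $A_i$ by attachment number, the edge count of Lemma~\ref{lemma_3}, and the observation (Lemma~\ref{lemma_4}) that when $|A_{\lfloor k/2\rfloor-1}|=\Theta(n)$ the relevant path vertices have a common neighborhood of size $\Theta(n)$, which forces the subgraph they induce to be $K_{m-1}$-free. That route localizes all the difficulty inside the $k-1$ vertices of the path and avoids your independence-of-$S$ problem entirely, while also surfacing the extra additive constant in the odd case that your approach would miss.
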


\begin{theorem}[\cite{2023-Katona-withoutlongpath}]\label{th_4}
   Let $G$ be an $n$-vertex $\{K_m,P_k\}$-free graph where $2m-1<k$. For sufficiently large $n\, (>N'(k))$,
\begin{equation*}
ex(n,\{K_m,P_k\})=\bigg(\Big\lfloor\frac{k}{2} \Big\rfloor-1\bigg)\bigg(n-\Big\lfloor\frac{k}{2} \Big\rfloor+1\bigg)+e\bigg(T\bigg(\Big\lfloor \frac{k}{2}\Big\rfloor-1, m-2\bigg)\bigg).
\end{equation*}
$T\big(\lfloor \frac{k}{2}\rfloor-1, m-2\big)\vee I_{n-\lfloor\frac{k}{2} \rfloor+1}$ is an  extremal graph.
\end{theorem}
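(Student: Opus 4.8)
The plan is to prove the matching lower and upper bounds separately, writing $\alpha=\lfloor k/2\rfloor-1$ and $f(n)=\alpha\left(n-\alpha\right)+e\left(T(\alpha,m-2)\right)$ for the claimed extremal value. For the lower bound I would simply analyze the graph $G_0=T(\alpha,m-2)\vee I_{n-\alpha}$. Since $2m-1<k$ forces $\alpha\ge m-1$, the Tur\'an graph $T(\alpha,m-2)$ is a genuine $(m-2)$-partite graph with $\omega=m-2$, so $\omega(G_0)=(m-2)+1=m-1$ and $G_0$ is $K_m$-free; and because every $I_{n-\alpha}$-vertex on a path must be separated by core vertices, the longest path of $G_0$ has at most $2\alpha+1\le k-1$ vertices, so $G_0$ is $P_k$-free. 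Counting $e(G_0)=\alpha(n-\alpha)+e(T(\alpha,m-2))=f(n)$ then gives $ex(n,\{K_m,P_k\})\ge f(n)$.

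The substance is the upper bound $ex(n,\{K_m,P_k\})\le f(n)$, and here the strategy is to reduce the (possibly disconnected) problem to the connected Theorem \ref{th_3} by a vertex-deletion induction on $n$. The engine is the classical companion to Erd\H{o}s--Gallai: a connected graph with minimum degree $\ge d$ contains a path on $\min\{n,2d+1\}$ vertices. Applied with $d=\lfloor k/2\rfloor$, this shows (for both parities of $k$) that any component on at least $k$ vertices with minimum degree $\ge\lfloor k/2\rfloor$ already contains $P_k$; hence every component of order $\ge k$ contains a vertex $v$ of degree $\le\alpha$. Since $f(n)-f(n-1)=\alpha$ exactly, deleting such a $v$ and invoking the inductive bound on $G-v$ yields $e(G)\le f(n-1)+\alpha=f(n)$, losing nothing.

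The induction terminates in the case where every component has fewer than $k$ vertices, and this is where the hypothesis $2m-1<k$ enters decisively. Such a $G$ is a disjoint union of $K_m$-free graphs each on $<k$ vertices, so by Tur\'an's theorem $e(G)\le\sum_i e(T(|D_i|,m-1))$; since the Tur\'an density $e(T(j,m-1))/j$ is nondecreasing in $j$, this is at most $c\,n$ with $c=e(T(k-1,m-1))/(k-1)$. The key arithmetic fact is that the hypothesis $2m-1<k$ guarantees the strict inequality $c<\alpha$ (which I would verify directly; e.g.\ at the boundary $k=2m$ one computes $c=(m-1)-\tfrac{m+1}{2m-1}<m-1=\alpha$). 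Writing $f(n)=\alpha n+\beta$ with $\beta=e(T(\alpha,m-2))-\alpha^2<0$, the strict gap $\alpha-c>0$ gives $cn\le\alpha n+\beta=f(n)$ once $n\ge -\beta/(\alpha-c)$, a constant; this disposes of the all-small-component case and is precisely why disjoint unions of dense $K_m$-free blocks cannot beat the join when $k>2m-1$.

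The step I expect to be the real obstacle is making the constant term exact rather than merely of the right order. The clean vertex-deletion induction needs a valid base, and near the threshold $n\approx N(k)$ a disconnected graph carrying an intermediate component of order in $[k,N(k)]$ is covered neither by Theorem \ref{th_3} nor by the trivial bound; these intermediate components, together with the precise additive constant $\beta=e(T(\alpha,m-2))-\alpha^2$, force a short stability analysis. Concretely, I would argue that any near-extremal $G$ must consist of a single large component whose $\ge k$-order part is handled by Theorem \ref{th_3} (superadditivity of $f$, i.e.\ $\beta<0$, rules out splitting the mass among several large components), and then show that the bounded ``core'' left after peeling off the low-degree vertices is $K_{m-1}$-free rather than merely $K_m$-free---because each core vertex retains essentially full adjacency to the peeled near-independent set, any $K_{m-1}$ in the core would close up to a $K_m$---so its edge count is at most $e(T(\alpha,m-2))$ and not the larger $e(T(\alpha,m-1))$. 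Pinning down this core structure, together with the boundary bookkeeping around $N(k)$, is the one place where genuine work beyond assembling the three tools above is required.
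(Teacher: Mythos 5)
Note first that this paper does not prove Theorem~\ref{th_4} at all: it is quoted from Katona and Xiao, and the closest the paper comes is the analogous machinery of Lemmas~\ref{lemma_1}--\ref{lemma_4} and the proofs of Theorems~\ref{theorem_1}--\ref{theorem_3}. Measured against that, your lower bound is correct, and you have correctly isolated the role of the hypothesis $2m-1<k$: it is exactly the condition making $e(T(k-1,m-1))/(k-1)<\lfloor k/2\rfloor-1$, so that unions of small dense $K_m$-free components lose asymptotically (your parity-by-parity verification of this inequality checks out, and it is the same inequality that selects the first term of the maximum in Theorem~\ref{theorem_3}).

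The upper bound, however, has a genuine gap, and it sits exactly where you suspect. Write $f(n)=\alpha(n-\alpha)+e(T(\alpha,m-2))$ with $\alpha=\lfloor k/2\rfloor-1$. Your induction peels vertices of degree at most $\alpha$ until every component has fewer than $k$ vertices and then invokes $e\le cn'$ with $c<\alpha$; but $f$ has a negative constant term, so $cn'\le f(n')$ only once $n'$ exceeds a constant threshold, whereas the peeling applied to one large connected component can drive it all the way down to $k-1$ vertices. There the residue can be $T(k-1,m-1)$, and one checks (e.g.\ $k=2m$, $m\ge 4$, where $e(T(2m-1,m-1))=2m^2-4m$ while $f(2m-1)=\tfrac{3m^2-5m}{2}$) that $e(T(k-1,m-1))>f(k-1)$, so the ledger $e(G)\le(n-k+1)\alpha+e(T(k-1,m-1))$ overshoots $f(n)$ by a positive constant and the induction does not close. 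Your proposed repair---that the surviving core is $K_{m-1}$-free because its vertices remain almost completely joined to the peeled set---is the true heart of the theorem, but it does not follow from the peeling: a deleted vertex of degree at most $\alpha$ need not be adjacent to the eventual core at all, and nothing in the deletion process produces a core with a common neighbourhood of size $\Theta(n)$. Obtaining that requires the longest-path structure used here and in Katona--Xiao: fix a longest path $P$, use Lemma~\ref{lemma_2} to force the attachment set $\{v_2,v_4,\dots\}$ of any outside vertex with $\lfloor k/2\rfloor-1$ neighbours on $P$, use Lemma~\ref{lemma_3} to show $\Theta(n)$ such vertices exist in a near-extremal component, and only then conclude as in Lemma~\ref{lemma_4} that $G[\{v_2,v_4,\dots\}]$ is $K_{m-1}$-free; the disconnected case is then settled by the component accounting of the proof of Theorem~\ref{theorem_3}. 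Without that chain your argument yields the exact leading term but only an $O_k(1)$-accurate constant, which is weaker than the stated theorem.
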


Katona and Xiao \cite{2023-Katona-withoutlongpath}  proposed to study $ex(n,\{P_k,H\})$ for an  $H$ with $\chi(H)>2$ and posed the following conjecture.

\begin{conjecture} [\cite{2023-Katona-withoutlongpath}]\label{conj_1}
   Suppose $\chi(H)>2$. Then
\begin{equation*}
ex(n,\{H,P_k\})=n\max\bigg\{\Big\lfloor \frac{k}{2}\Big\rfloor-1,\frac{ex(k-1,H)}{k-1}\bigg\}+O_k(1).
\end{equation*}
\end{conjecture}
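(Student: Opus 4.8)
The plan is to prove matching lower and upper bounds, both of the shape $\rho n + O_k(1)$, where $\rho := \max\{\lfloor \frac{k}{2}\rfloor - 1,\ \frac{ex(k-1,H)}{k-1}\}$. Write $a := \lfloor \frac{k}{2}\rfloor - 1$, $\rho_1 := a$ and $\rho_2 := \frac{ex(k-1,H)}{k-1}$, so $\rho = \max\{\rho_1,\rho_2\}$. I take $n$ large compared to $k$, and treat the finitely many degenerate small values of $k$ separately (they are absorbed into $O_k(1)$).

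For the lower bound I would exhibit one construction for each term. For $\rho_1$, take the complete bipartite graph $K_{a,\,n-a}$: it is bipartite, and since $\chi(H)>2$ forces $H$ to contain an odd cycle it is $H$-free; its longest path has $2a+1 = 2\lfloor \frac{k}{2}\rfloor - 1 < k$ vertices, so it is $P_k$-free; and it has $a(n-a) = \rho_1 n - O_k(1)$ edges. For $\rho_2$, take $\lfloor \frac{n}{k-1}\rfloor$ vertex-disjoint copies of a graph in $EX(k-1,H)$, padded with isolated vertices. Each copy has fewer than $k$ vertices, hence no $P_k$, and the union is $H$-free (immediate when $H$ is connected, since no copy of $H$ can cross components; the disconnected case needs a short extra argument). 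This gives $\lfloor \frac{n}{k-1}\rfloor\, ex(k-1,H) = \rho_2 n - O_k(1)$ edges. Together, $ex(n,\{H,P_k\}) \ge \rho n - O_k(1)$.

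The upper bound is the heart of the argument, and I would obtain it by decomposing $G$ into connected components and bounding the density $e(C)/|C|$ of each component $C$ by $\rho$. There are three regimes. If $|C| = m \le k-2$, then trivially $e(C)/m \le \frac{m-1}{2} \le \frac{k-3}{2} \le \rho_1$. If $|C| = k-1$, then $C$ is $H$-free on $k-1$ vertices, so $e(C) \le ex(k-1,H)$ and $e(C)/|C| \le \rho_2$. If $|C| = m \ge k$, then $C$ is connected and $P_k$-free, so by Theorem~\ref{th_2} (and Kopylov~\cite{1977-kopylov-maximal} for the boundary case $m=k$), $e(C)$ is at most the larger of $\binom{k-2}{2}+(m-k+2)$ and $\binom{\lceil k/2\rceil}{2}+(\lfloor \frac{k}{2}\rfloor - 1)(m-\lceil k/2\rceil)$, and a direct computation shows each of these is at most $\rho_1 m$ for every $m \ge k$. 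Summing $e(C) \le \rho|C|$ over all components yields $e(G) \le \rho n$, which is even slightly stronger than the conjectured $\rho n + O_k(1)$.

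The main obstacle is controlling the third regime: one must verify that the connected $P_k$-free extremal value never exceeds density $\rho_1$ for any component of size $\ge k$. The crude Erd\H{o}s--Gallai bound of Theorem~\ref{th_1} is too weak (for odd $k$ it permits density up to $\frac{k-2}{2}>\rho_1$), so the connectedness of each large component, through Theorem~\ref{th_2}, is indispensable; the required inequalities $\binom{k-2}{2}+(m-k+2) \le \rho_1 m$ for $m\ge k$ and $\binom{\lceil k/2\rceil}{2} \le \rho_1\lceil k/2\rceil$ hold once $k\ge 5$, which pins the leading coefficient to exactly $\rho_1$. The only other delicate point is the $H$-freeness of the disjoint-union construction when $H$ is disconnected. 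I expect this asymptotic statement to be the softer companion of the exact connected result for $ex_{conn}(n,\{P_k,H\})$: the latter requires a stability/structure analysis identifying the extremal join $G_A \vee I_{n-a}$ together with the relevant decomposition family of $H$, whereas the conjecture follows from the component count above using only the off-the-shelf bounds of Theorems~\ref{th_1} and~\ref{th_2}.
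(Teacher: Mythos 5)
Your proposal is correct, and while it shares the paper's overall skeleton -- a component decomposition with the same three size regimes ($\le k-2$, $=k-1$, $\ge k$) and the same disjoint-union construction for the $\frac{ex(k-1,H)}{k-1}$ term -- it differs in the key ingredient for large components. For a component $C$ with $|C|=m\ge k$, you invoke the Balister--Gy\H{o}ri--Lehel--Schelp bound (Theorem \ref{th_2}, with Kopylov at the boundary $m=k$) and check directly that both expressions in that maximum are at most $(\lfloor k/2\rfloor-1)m$; this uses only connectivity and $P_k$-freeness, never $H$-freeness. The paper instead bounds $e(C)$ by $ex(m,H)+(\lfloor k/2\rfloor -1)(|C|-m)$ via its Lemma \ref{lemma_3} applied to a longest path $P_m$, which for odd $k$ forces a further case split on whether $P_{k-1}\subseteq C$ and a weighted averaging over $n_1,\dots,n_4$ to absorb the excess $ex(k-1,H)-\frac{(k-1)(k-3)}{2}$; your route makes the odd case as clean as the even one and is more self-contained (likewise your $K_{\lfloor k/2\rfloor-1,\,n-\lfloor k/2\rfloor+1}$ lower-bound construction replaces the paper's appeal to Theorems \ref{theorem_1} and \ref{theorem_2}). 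Two small points of diligence: your verification of $\binom{k-2}{2}+(m-k+2)\le(\lfloor k/2\rfloor-1)m$ and $\binom{\lceil k/2\rceil}{2}\le(\lfloor k/2\rfloor-1)\lceil k/2\rceil$ for $m\ge k$, $k\ge 5$ does check out; and the $H$-freeness of the disjoint-union construction when $H$ is disconnected, which you rightly flag as needing an argument, is an issue the paper's own lower bound silently shares.
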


In this paper, we determine the exact value of $ex_{conn}(n,\{P_k,H\})$ for sufficiently large $n$ and obtain asymptotical result of $ex(n,\{P_k,H\})$ which confirms Conjecture \ref{conj_1}.

Let $\mathcal{H}$ be the family of graphs obtained from $H$ by deleting a color class of $H$. Our main results are the following.

\begin{theorem}\label{theorem_1}
    If $H$ is a  graph with $\chi(H)> 2$ and $k$ is even, then
\begin{equation*}
ex_{conn}(n,\{P_k,H\})=ex\Big(\frac{k}{2}-1,\mathcal{H}\Big)+\Big(\frac{k}{2}-1\Big)\Big(n-\frac{k}{2}+1\Big)
\end{equation*}
 for sufficiently large $n$. The extremal graph is $T\vee I_{n-k/2+1}$, where $T\in EX(\frac{k}{2}-1,\mathcal{H})$.
\end{theorem}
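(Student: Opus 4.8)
The plan is to prove the two inequalities separately, treating the construction as the lower bound and a core-extraction argument as the upper bound. For the lower bound I set $G_0 = T\vee I_{n-k/2+1}$ with $T\in EX(\tfrac{k}{2}-1,\mathcal{H})$ and check three things. Connectedness is immediate, since $\tfrac k2-1\ge 1$ and every vertex of $I_{n-k/2+1}$ is joined to all of $T$. For $P_k$-freeness I note that on any path the vertices lying in $I_{n-k/2+1}$ form an independent set, so two consecutive path-vertices of $I_{n-k/2+1}$ must be separated by a vertex of $T$; hence a path uses at most $|V(T)|=\tfrac k2-1$ vertices of $T$ and at most $\tfrac k2$ vertices of $I_{n-k/2+1}$, i.e. at most $k-1$ vertices in all. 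For $H$-freeness I argue that if $H\subseteq G_0$, then the set $A$ of its vertices lying in $I_{n-k/2+1}$ is a color class (independent set) of $H$, and the rest induce a copy of $H-A$ inside $T$; extending $A$ to a maximal independent set $A^{*}$ gives $H-A^{*}\in\mathcal{H}$ with $H-A^{*}\subseteq H-A\subseteq T$, contradicting $T\in EX(\tfrac k2-1,\mathcal{H})$. Finally $e(G_0)=e(T)+(\tfrac k2-1)(n-\tfrac k2+1)=ex(\tfrac k2-1,\mathcal{H})+(\tfrac k2-1)(n-\tfrac k2+1)$, which is the claimed value.

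For the upper bound, let $G$ be a connected $n$-vertex $\{P_k,H\}$-free graph attaining $ex_{conn}(n,\{P_k,H\})$ edges; by the lower bound $e(G)\ge(\tfrac k2-1)(n-\tfrac k2+1)$, which is $\Theta(n)$ with the correct leading coefficient. Since $G$ is connected and $P_k$-free, Theorem~\ref{th_2} shows that for large $n$ the edge count is forced onto the second branch of the maximum, and I would use the structure underlying that bound (following the strategy Katona and Xiao used for $K_m$ in Theorem~\ref{th_3}) to extract a dense core: a set $D\subseteq V(G)$ with $|D|\le\tfrac k2-1$ such that $G-D$ is almost independent and $\Omega(n)$ vertices of $G-D$ are adjacent to every vertex of $D$. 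The two quantitative inputs are (i) that $G$ has at most $\tfrac k2-1$ vertices of degree exceeding a threshold $\Theta(k)$, since otherwise one greedily builds a $P_k$ by alternating high-degree vertices with private neighbours, and (ii) that the low-degree remainder spans only $O_k(1)$ edges among itself, as a dense or long configuration there again produces a $P_k$.

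With the core in hand, the central step is to show $G[D]$ is $\mathcal{H}$-free. Suppose $G[D]$ contained a copy of $H-C$ for some color class (independent set) $C$ of $H$. Using that $D$ has $\Omega(n)$ common neighbours outside $D$, I pick $|C|$ distinct such common neighbours and place the independent set $C$ on them: every required edge of $H$ between $C$ and $V(H)\setminus C$ is present because each chosen vertex is adjacent to all of $D$, and no edge is required inside $C$, so $G\supseteq H$, a contradiction. Hence $e(G[D])\le ex(\tfrac k2-1,\mathcal{H})$. Combining this with $e(D,V\setminus D)\le(\tfrac k2-1)(n-\tfrac k2+1)$ and the bound on $e(G-D)$ gives $e(G)\le ex(\tfrac k2-1,\mathcal{H})+(\tfrac k2-1)(n-\tfrac k2+1)+O_k(1)$, matching the lower bound up to an additive constant.

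I expect the main obstacle to be twofold: pinning the core to the exact size $\tfrac k2-1$ with clean domination, and removing the $O_k(1)$ error to reach exact equality. For the latter I would run an exchange/optimality argument on an extremal $G$: I would show $V\setminus D$ is exactly independent and each of its vertices is joined to all of $D$, since otherwise deleting a stray edge inside $V\setminus D$ or rerouting an under-connected vertex onto the full core strictly increases $e(G)$ while preserving connectivity and $\{P_k,H\}$-freeness, contradicting extremality. This forces $G=G[D]\vee I_{n-|D|}$ with $G[D]$ being $\mathcal{H}$-free on exactly $\tfrac k2-1$ vertices, whence $e(G[D])=ex(\tfrac k2-1,\mathcal{H})$ and equality holds. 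The delicate point throughout is the compatibility of the two forbidden subgraphs — making the $P_k$-driven core extraction and the $H$-driven reinsertion argument interact cleanly, and handling the boundary regime where $\tfrac k2-1$ is small relative to $|V(H)|$ so that the extremal graph is genuinely of join type.
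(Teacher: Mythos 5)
Your lower bound is fine and in fact more careful than the paper's (which simply asserts that $T\vee I_{n-k/2+1}$ is $\{P_k,H\}$-free), and the central idea of your upper bound --- a core of $\frac{k}{2}-1$ vertices with $\Omega(n)$ common neighbours outside it, shown to be $\mathcal{H}$-free by re-inserting the missing colour class into those common neighbours --- is exactly the mechanism of the paper's Lemma~\ref{lemma_4}. The gap is in how you propose to \emph{produce} the core and in how you reach exact equality.

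Your quantitative input (i) is false as stated: a connected $P_k$-free graph can have arbitrarily many vertices of arbitrarily large degree. Take a ``star of stars'': a centre $c$ joined to $c_1,\dots,c_t$, each $c_i$ joined to $m$ private leaves. This graph is connected and $P_6$-free, yet has $t$ vertices of degree $m+1$, so for $k=6$ the claimed bound $\frac{k}{2}-1=2$ fails for every threshold. The greedy alternation you invoke breaks down because high-degree vertices need not share neighbours. Input (ii) is likewise unjustified: a bounded-degree $P_k$-free remainder can still carry $\Theta(n)$ edges by Erd\H{o}s--Gallai, not $O_k(1)$. And the concluding exchange argument is only sketched; as written (``deleting a stray edge \dots\ strictly increases $e(G)$'') it is not coherent, and rerouting a vertex onto the core must be checked not to create a $P_k$ or an $H$. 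The paper sidesteps all of this: an extremal $G$ must contain a $P_{k-1}=(v_1,\dots,v_{k-1})$ (else Erd\H{o}s--Gallai already falls below the lower bound); Lemma~\ref{lemma_2} forces every outside vertex with the maximal number $\frac{k}{2}-1$ of path-neighbours to attach exactly to $\{v_2,v_4,\dots,v_{k-2}\}$; Lemma~\ref{lemma_3} shows this class $A_{k/2-1}$ has $\Theta(n)$ vertices in an extremal graph, so the even-indexed path vertices are your core, the odd-indexed ones are independent by Lemma~\ref{lemma_2}, whence $e(G[V(P_{k-1})])\le ex(\frac{k}{2}-1,\mathcal{H})+(\frac{k}{2}-1)\frac{k}{2}$ by Lemma~\ref{lemma_4}; and Lemma~\ref{lemma_3} then closes the count exactly, with no $O_k(1)$ error term left to remove. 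You would need to replace your degree-based extraction with this longest-path decomposition (or supply a correct substitute) for the upper bound to stand.
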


\begin{theorem}\label{theorem_2}
   If $H$ is a  graph with $\chi(H)> 2$ and $k$ is  odd,  then
\begin{equation*}
ex_{conn}(n,\{P_k,H\})=ex\Big(\frac{k-3}{2},\mathcal{H}\Big)+\frac{k-3}{2}\Big(n-\frac{k-3}{2}\Big)+c
\end{equation*}
for sufficiently large $n$, where $c=0$ or $1$.
\end{theorem}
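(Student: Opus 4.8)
The plan for the lower bound is to imitate the construction behind Theorem \ref{theorem_1}. Put $a=\frac{k-3}{2}$ and let $T\in EX(a,\mathcal{H})$. First I would verify that $G_0:=T\vee I_{n-a}$ is connected, $P_k$-free and $H$-free. For $P_k$-freeness, observe that in any graph of the form $A\vee B$ a path alternates between maximal runs of $B$-vertices and separating $A$-vertices, so if a path $P$ uses $j$ vertices of $A$ and $b$ vertices of $B$, then counting the within-$B$ edges of $P$ gives $b\le j+e(B)+1$, whence $|V(P)|\le 2|A|+e(B)+1$; with $|A|=a$ and $e(B)=0$ this is $2a+1=k-2<k$. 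For $H$-freeness I would use that $I_{n-a}$ is independent: a copy of $H$ meets $I_{n-a}$ in an independent set, which can be taken as a color class $C$ of $H$, and deleting it leaves a copy of $H-C\in\mathcal{H}$ inside $T$, contradicting $T\in EX(a,\mathcal{H})$. This gives $ex_{conn}\ge e(G_0)=ex(a,\mathcal{H})+a(n-a)$, i.e. $c\ge 0$. To realize $c=1$ in the appropriate cases I would insert one edge into the independent part, i.e. take $T\vee(I_{n-a-2}\cup K_2)$; the same bound $|V(P)|\le 2a+e(B)+1=2a+2=k-1$ keeps it $P_k$-free, and $c=1$ holds precisely when this extra edge can be added without creating $H$.

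\textbf{Upper bound: reduction and core extraction.} For the upper bound let $G$ be a connected $\{P_k,H\}$-free graph on $n$ vertices; the goal is $e(G)\le ex(a,\mathcal{H})+a(n-a)+1$. By Theorem \ref{th_2} the edge count is $a\,n+O_k(1)$, with the correct linear coefficient $\lfloor k/2\rfloor-1=a$; in fact for large $n$ the bound of Theorem \ref{th_2} equals $a\,n-\frac{a^2+a}{2}+1$, which coincides with $ex(a,\mathcal{H})+a(n-a)+1$ exactly when $K_a$ is $\mathcal{H}$-free. Thus when $K_a$ is $\mathcal{H}$-free the upper bound is immediate from Theorem \ref{th_2}, and the substantive case is the complementary one, where $H$-freeness must force strictly fewer edges than Theorem \ref{th_2} alone permits. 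The plan is to isolate a \emph{core} $A$ of the vertices of very large degree (degree at least a fixed fraction of $n$) and to show $|A|\le a$: were there $a+1$ such vertices, their pairwise common neighbourhoods would be large (as $n$ is large), so one could interleave $a+1$ distinct connectors and two end-vertices into a path $w_0u_1w_1\cdots u_{a+1}w_{a+1}$ on $2a+3=k$ vertices, contradicting $P_k$-freeness.

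\textbf{Upper bound: the core is $\mathcal{H}$-free and the outside is nearly empty.} Once $A$ is extracted I would show, using extremality and $n$ large, that $A$ is essentially completely joined to $B:=V\setminus A$, that $|A|=a$, and that $B$ contains a large independent set. Then the count $e(G)=e(G[A])+e(A,B)+e(B)$ is controlled term by term. The join gives $e(A,B)\le a(n-a)$. For $G[A]$ I would prove it is $\mathcal{H}$-free: a copy of $F=H-C\in\mathcal{H}$ inside $A$ could be completed to a copy of $H$ by picking $|C|$ pairwise non-adjacent vertices of the independent set of $B$, each joined to all of $A$, to play the role of the color class $C$; hence $e(G[A])\le ex(a,\mathcal{H})$. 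Finally $e(B)\le 1$, since two edges in $B$ (disjoint, or sharing a vertex) together with the complete join to $A$ and $|A|=a=\frac{k-3}{2}$ let one thread the $a$ core vertices between the endpoints of those edges and extra vertices of $B$ to form a path on $2a+3=k$ vertices. Summing yields $e(G)\le ex(a,\mathcal{H})+a(n-a)+1$, so $c\in\{0,1\}$.

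\textbf{Main obstacle.} The delicate point is the stability argument in the previous paragraph: proving that a near-extremal connected $P_k$-free graph genuinely has the ``$A\vee\text{sparse}$'' shape with $|A|=a$ and $A$ completely joined to $B$. In particular one must rule out the trade-off in which a missing edge between $A$ and $B$ is compensated by additional edges inside $B$; verifying that this exchange is never profitable for large $n$ is the crux, and it is exactly here that the ``sufficiently large $n$'' hypothesis is used. A secondary task is to decide which of $c=0,1$ occurs, which reduces to determining, in terms of the color classes of $H$ and the structure of $EX(a,\mathcal{H})$, whether the single extra edge of the construction can be present without producing a copy of $H$.
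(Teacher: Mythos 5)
Your lower bound is fine and matches the paper's construction $T\vee I_{n-(k-3)/2}$, and your observation that Theorem \ref{th_2} already gives the upper bound whenever $K_{(k-3)/2}$ is $\mathcal{H}$-free is correct and a pleasant shortcut. The problem is the complementary case, which is where the content of the theorem lies, and there your argument has a genuine gap. The step ``were there $a+1$ vertices of degree at least a fixed fraction of $n$, their pairwise common neighbourhoods would be large'' is false: two vertices of linear degree in a connected $P_k$-free graph can have empty common neighbourhood. Concretely, a caterpillar whose spine has $k-3$ vertices, each carrying about $n/(k-3)$ leaves, is connected, $H$-free (it is a tree) and has longest path on $k-1$ vertices, hence is $P_k$-free, yet it has $k-3=2a>a$ vertices of degree $\Theta(n)$; for $k=5$ the double star even attains the lower-bound value $n-1$ while having $2>a=1$ such vertices. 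So the bound $|A|\le a$ on your core does not follow from degrees and connectivity alone, and everything downstream ($|A|=a$, the complete join to $B$, the $\mathcal{H}$-freeness of $G[A]$, $e(B)\le 1$) is built on it. You do flag the join structure as ``the crux,'' but you present the core-size bound as settled when it is in fact the first thing that breaks; as written, the upper bound in the hard case is a plan rather than a proof.

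The paper avoids this entirely by not arguing through degrees. It takes a longest path $P$ (on $k-1$ or $k-2$ vertices), classifies the outside vertices by their number of neighbours on $P$ into the sets $A_i$ with $i\le\lfloor k/2\rfloor-1$ (Lemma \ref{lemma_2}), bounds $e(G)-e(G[V(P)])$ by Lemma \ref{lemma_3}, deduces from a comparison with the lower bound that $\big|A_{(k-3)/2}\big|=\Theta(n)$, and then exploits the rigidity of the attachment pattern of such vertices (they must meet $P$ in one of a few explicit position sets) to conclude that the corresponding set of path vertices induces an $\mathcal{H}$-free graph (Lemma \ref{lemma_4}); summing the two estimates gives the $+1$ slack. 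If you want to salvage your route you must prove the core statement for near-extremal graphs only, using the edge count to exclude caterpillar-type configurations, and carrying that out essentially forces you back onto the longest-path machinery.
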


 The following theorem gives a sufficient condition for $c=0$ in Theorem \ref{theorem_2}.  Let $\mathcal{H}'$ be the family of graphs obtained from $H$ by deleting two adjacent vertices in $V(H)$.

\begin{theorem}\label{theorem_4}
    If $H$ is a graph with $\chi(H)>2$, $k$ is odd, and  every graph in $EX(\frac{k-3}{2},\mathcal{H})$ contains at least a member in $\mathcal{H}'$ , then
\begin{equation*}
ex_{conn}(n,\{P_k,H\})=ex\Big(\frac{k-3}{2},\mathcal{H}\Big)+\frac{k-3}{2}\Big(n-\frac{k-3}{2}\Big)
\end{equation*}
for sufficiently large $n$. The extremal graph is $T\vee I_{n-(k-3)/2}$, where $T$ is a graph in $EX(\frac{k-3}{2},\mathcal{H})$.
\end{theorem}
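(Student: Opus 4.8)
Set $t=\frac{k-3}{2}$ and $M=ex(t,\mathcal H)+t(n-t)$. The construction $T\vee I_{n-t}$ with $T\in EX(t,\mathcal H)$ is connected, and it is $\{P_k,H\}$-free: a longest path meets the independent side in at most $t+1$ vertices, hence has at most $2t+1=k-2$ vertices; and any copy of $H$ would force $H$ minus an independent set, i.e. some member of $\mathcal H$, into $T$. Thus $ex_{conn}(n,\{P_k,H\})\ge M$, and by Theorem \ref{theorem_2} the value is $M$ or $M+1$, so the whole statement reduces to \emph{excluding} $M+1$. The plan is to assume a connected $\{P_k,H\}$-free graph $G^{\ast}$ with $e(G^{\ast})=M+1$ exists and derive a contradiction with the hypothesis. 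For this I would invoke the structural description obtained in the course of proving Theorem \ref{theorem_2}: an extremal graph decomposes as $G^{\ast}=T\vee R$, where $|V(T)|=t$, $T$ is $\mathcal H$-free, and $R$ is a graph on the remaining $n-t$ vertices. Because a second independent edge, or a path $P_3$, inside $R$ would together with the complete join to $T$ already create a $P_k$ (a $2K_2$ in $R$ yields a path on $2t+3=k$ vertices), the $P_k$-freeness forces $e(R)\le 1$. Counting edges, $e(G^{\ast})=e(T)+t(n-t)+e(R)$; since $e(T)\le ex(t,\mathcal H)$ and $e(R)\le 1$, the equality $e(G^{\ast})=M+1$ forces $e(T)=ex(t,\mathcal H)$ and $e(R)=1$. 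Hence $T\in EX(t,\mathcal H)$ and $R$ contains exactly one edge $uv$.

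The heart of the argument is the following embedding fact. Suppose $T$ contains a copy of some member $H_0\in\mathcal H'$, say $H_0=H-\{x,y\}$ with $xy\in E(H)$, and suppose $R$ contains an edge $uv$. Then $T\vee R$ contains $H$: embed $V(H)\setminus\{x,y\}=V(H_0)$ into $T$ via the given copy of $H_0$ and send $x\mapsto u$, $y\mapsto v$. The edges inside $V(H_0)$ are realized by the copy of $H_0$; the edge $xy$ is realized by $uv\in E(R)$; and every edge joining $x$ or $y$ to $V(H_0)$ is realized because, in the join $T\vee R$, both $u$ and $v$ are adjacent to \emph{all} of $V(T)$. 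Thus $H\hookrightarrow T\vee R$. (This uses $t\ge |V(H)|-2$, which holds for $k$ large and is implicit in the hypothesis that members of $EX(t,\mathcal H)$ can contain a member of $\mathcal H'$.)

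Combining these, from the decomposition we have $T\in EX(t,\mathcal H)$, so by the hypothesis of the theorem $T$ contains a member of $\mathcal H'$; since $R$ has the edge $uv$, the embedding fact yields $H\subseteq G^{\ast}$, contradicting that $G^{\ast}$ is $H$-free. Therefore no connected $\{P_k,H\}$-free graph attains $M+1$ edges, so $c=0$ and $ex_{conn}(n,\{P_k,H\})=M=ex(t,\mathcal H)+t(n-t)$, with $T\vee I_{n-t}$ extremal by the matching lower bound above. I expect the main obstacle to be not this final deduction but the structural input it rests on, namely proving that \emph{every} connected extremal graph is genuinely a join $T\vee R$ with $|V(T)|=t$ and $e(R)\le 1$ — that the entire edge surplus over $t(n-t)$ is confined to a set of $t$ vertices with an almost-independent remainder. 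That part is carried out (for the value, up to $c$) in Theorem \ref{theorem_2}; the new ingredient here is simply that the single admissible extra edge in $R$ is incompatible, via the embedding fact, with $T$ being a member of $EX(t,\mathcal H)$ that contains a graph from $\mathcal H'$.
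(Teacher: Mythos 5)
Your lower bound and your final ``embedding fact'' are exactly the mechanism the paper uses (it is Lemma \ref{lemma_5}: if an $\mathcal{H}$-free graph $T$ attaining $ex(l,\mathcal{H})$ edges is completely joined to an edge $uv$, then a copy of some $H_0\in\mathcal{H}'$ in $T$ together with $uv$ yields $H$). However, as written your proof has a genuine gap, and it is precisely the step you flag at the end but misattribute to Theorem \ref{theorem_2}: the claim that every connected $\{P_k,H\}$-free graph with $M+1$ edges decomposes as a complete join $T\vee R$ with $|V(T)|=\frac{k-3}{2}$ and $T$ $\mathcal{H}$-free. The proof of Theorem \ref{theorem_2} establishes only the numerical bound $e(G)\le M+1$ via Lemmas \ref{lemma_3} and \ref{lemma_4}; it does not show that an extremal graph has this join structure, and extracting such a structure would require a separate stability-type argument (a priori the surplus over $\frac{k-3}{2}(n-\frac{k-3}{2})$ need not be concentrated on a set of $\frac{k-3}{2}$ vertices joined to everything else). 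Everything downstream of that decomposition in your argument --- $e(R)\le 1$ from $P_k$-freeness, the forcing of $T\in EX(\frac{k-3}{2},\mathcal{H})$ and $e(R)=1$, and the final contradiction --- is fine \emph{conditional} on it, but the decomposition itself is unproven.

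The paper avoids this global structural claim by applying the same idea locally on the longest path $P_{k-1}=(v_1,\dots,v_{k-1})$. Since $|A_{(k-3)/2}|=\Theta(n)$, the $\frac{k-3}{2}$ even-indexed vertices forming the common neighbourhood (say $\{v_2,v_4,\dots,v_{k-3}\}$) induce an $\mathcal{H}$-free graph, and adding the two adjacent vertices $v_{k-2},v_{k-1}$ gives an $H$-free graph on $\frac{k-3}{2}+2$ vertices; Lemma \ref{lemma_5} then caps its edge count at $ex(\frac{k-3}{2},\mathcal{H})+k-3$ rather than $ex(\frac{k-3}{2},\mathcal{H})+k-2$, which is exactly how Lemma \ref{lemma_6} removes the $+1$ from the bound of Lemma \ref{lemma_4}; the global count then follows from Lemma \ref{lemma_3} as in Theorem \ref{theorem_2}. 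To repair your proof you should either supply the join decomposition of extremal graphs (not currently available) or, as the paper does, run the Lemma \ref{lemma_5} argument on the subgraph induced by $V(P_{k-1})$ instead of on all of $G^{\ast}$.
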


We will show that   Theorem \ref{th_3} can be  deduced from  Theorem \ref{theorem_4} in Section 3.

\begin{theorem}\label{theorem_3}
   Suppose $n$ is sufficiently large and $\chi(H)>2$. Then
\begin{equation*}
ex(n,\{H,P_k\})=n\max\bigg\{\Big\lfloor \frac{k}{2}\Big\rfloor-1,\frac{ex(k-1,H)}{k-1}\bigg\}+O_k(1).
\end{equation*}
\end{theorem}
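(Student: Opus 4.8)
The plan is to prove matching bounds, with the lower bound coming from two explicit constructions and the upper bound from a component decomposition in which every component is charged at most $M:=\max\{\lfloor k/2\rfloor-1,\ ex(k-1,H)/(k-1)\}$ edges per vertex, \emph{with no additive constant}. Writing the target as $ex(n,\{H,P_k\})=Mn+O_k(1)$, I expect the upper bound to give $e(G)\le Mn$ exactly, so that the $O_k(1)$ term is produced solely by the lower-order contributions of the constructions.

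For the lower bound I would exhibit two $\{H,P_k\}$-free graphs. First, Theorems~\ref{theorem_1} and~\ref{theorem_2} already supply a connected $\{H,P_k\}$-free graph (of the form $T\vee I_s$ with $T\in EX(\cdot,\mathcal H)$) having $(\lfloor k/2\rfloor-1)n+O_k(1)$ edges. Second, take $\lfloor n/(k-1)\rfloor$ vertex-disjoint copies of a graph in $EX(k-1,H)$ together with a bounded number of leftover vertices; since every component has fewer than $k$ vertices this graph is $P_k$-free, and it is $H$-free because each component is, so it has $\tfrac{ex(k-1,H)}{k-1}\,n+O_k(1)$ edges. Taking whichever construction is denser yields $ex(n,\{H,P_k\})\ge Mn-O_k(1)$.

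For the upper bound let $G$ be an $n$-vertex $\{H,P_k\}$-free graph and split its components by order. If a component $C$ has order $m\ge k$, then $C$ is connected and $P_k$-free, so by Kopylov's theorem \cite{1977-kopylov-maximal} (Theorem~\ref{th_2} records the extremal graphs for $m>k$) $e(C)$ is at most the larger of $\binom{k-2}{2}+(m-k+2)$ and $\binom{\lceil k/2\rceil}{2}+\lfloor\tfrac{k-2}{2}\rfloor(m-\lceil k/2\rceil)$. A direct check shows both expressions are at most $(\lfloor k/2\rfloor-1)m$ for every $m\ge k$: the first has slope $1$ and already lies below at $m=k$, while the second has slope exactly $\lfloor k/2\rfloor-1$ and nonpositive intercept. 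Hence $e(C)\le(\lfloor k/2\rfloor-1)m\le Mm$. If instead $C$ has order $m\le k-1$, then $C$ is $H$-free, so $e(C)\le ex(m,H)\le\binom{m}{2}=\tfrac{m-1}{2}\,m$; for $m\le k-2$ this already gives $\tfrac{m-1}{2}\le\lfloor k/2\rfloor-1$, hence $e(C)\le Mm$, and for the single remaining value $m=k-1$ we instead use $e(C)\le ex(k-1,H)=\tfrac{ex(k-1,H)}{k-1}(k-1)\le M(k-1)$. In every case $e(C)\le M\,|V(C)|$, so summing over all components gives $e(G)\le Mn$, which together with the lower bound proves the theorem.

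The main obstacle is that $G$ may have $\Theta(n)$ components, so each per-component estimate must be linear in the order of the component with \emph{no} additive constant; any fixed surplus would accumulate into an extra linear term and destroy the $O_k(1)$ precision. This is precisely why, for odd $k$, the crude global Erd\H{o}s--Gallai bound (Theorem~\ref{th_1}) of $\tfrac{k-2}{2}n$ does not suffice: its coefficient $\tfrac{k-2}{2}$ strictly exceeds $\lfloor k/2\rfloor-1=\tfrac{k-3}{2}$, and the surplus $\tfrac12 n$ is linear rather than $O_k(1)$. The resolution is to invoke the sharper \emph{connected} bound on each large component, where the extra structure pushes the coefficient down to $\lfloor k/2\rfloor-1$; the decisive computation is verifying that the two-term Kopylov maximum has already dropped to $(\lfloor k/2\rfloor-1)m$ at the boundary $m=k$ and stays below for larger $m$, and then dovetailing this estimate with the two-regime split ($m\le k-2$ versus $m=k-1$) used for the small $H$-free components.
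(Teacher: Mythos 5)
Your argument is correct and shares the paper's overall skeleton --- the same two lower-bound constructions (a graph from Theorems~\ref{theorem_1}/\ref{theorem_2} versus a disjoint union of members of $EX(k-1,H)$), and an upper bound obtained by charging each component at most $M=\max\{\lfloor k/2\rfloor-1,\,ex(k-1,H)/(k-1)\}$ edges per vertex --- but the key per-component estimate is reached by a genuinely different route. For a component of order $l_i\ge k$ the paper never invokes the connected path-Tur\'an bound: it takes a longest path $P_m$ ($m\le k-1$), bounds the edges meeting $V(G_i)\setminus V(P_m)$ via Lemma~\ref{lemma_3} and the edges inside $V(P_m)$ by $ex(m,H)$, and for odd $k$ this forces a four-way split ($l_i<k-1$; $l_i=k-1$; $l_i\ge k$ without $P_{k-1}$; $l_i\ge k$ with $P_{k-1}$) followed by a weighted-average computation in $n_1,\dots,n_4$ to absorb the surplus $ex(k-1,H)-\tfrac{(k-1)(k-3)}{2}$. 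You instead discard $H$-freeness on large components entirely and apply the Kopylov/Balister--Gy\H{o}ri--Lehel--Schelp bound directly; your verification is right, since for odd $k$ the second expression in Theorem~\ref{th_2} equals $\big(\lfloor k/2\rfloor-1\big)l_i-\tfrac{(k-5)(k+1)}{8}$ and the first is dominated for $k\ge 5$, so both are at most $\big(\lfloor k/2\rfloor-1\big)l_i\le Ml_i$. This makes the odd case no harder than the even one and removes the $n_1,\dots,n_4$ bookkeeping; what the paper's route buys is that it reuses only its own Lemma~\ref{lemma_3} (already needed for Theorems~\ref{theorem_1} and~\ref{theorem_2}) rather than importing Theorem~\ref{th_2}. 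One boundary point you should make explicit: Theorem~\ref{th_2} is quoted for $n>k$, so for components of order exactly $k$ you need Kopylov's bound in the range $n\ge k$ (equivalently, the classical estimate for connected graphs with no Hamiltonian path); falling back on Erd\H{o}s--Gallai there would cost $k/2$ extra edges per such component, which is not affordable when $\Theta(n)$ components have order exactly $k$.
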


The rest of the paper is organized as follows.
 In Section 2, we present some preliminaries and lemmas.
The proofs of main results will be given in  Sections 3 and 4. We give some discussion in the last section.

\section{Preliminaries}

   The following results can be found in \cite{2023-Katona-withoutlongpath}.
   \begin{lemma}[\cite{2023-Katona-withoutlongpath}] \label{lemma_1}
      Let $G$ be a connected graph on $n$ vertices with a path $P$ on $k-1$ vertices but no path on $k$ vertices. Let $u\in V(G)\setminus V(P)$ be a vertex adjacent to $s\geq 1$ vertices of $P$ and assume a longest path $Q$ in $G\setminus V(P)$ starting at $u$ has $j\, (j\geq 0)$ vertices. Then, $s+j\leq \lfloor \frac{k}{2}\rfloor$.
   \end{lemma}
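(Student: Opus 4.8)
The plan is to argue entirely through explicit path constructions: assuming a violation of the bound, I would splice together $Q$, the vertex $u$, and a suitable stretch of $P$ to produce a path on at least $k$ vertices, contradicting $P_k$-freeness. Write $P = v_1 v_2 \cdots v_{k-1}$ and let the neighbors of $u$ on $P$ be $v_{i_1}, \dots, v_{i_s}$ with $1 \le i_1 < i_2 < \cdots < i_s \le k-1$; write $Q = w_1 w_2 \cdots w_j$ with $w_1 = u$ and $w_2,\dots,w_j \in V(G)\setminus V(P)$. The whole argument then reduces to three elementary inequalities about the positions $i_t$, which are combined at the end.

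First I would establish a spacing estimate: no two neighbors of $u$ are consecutive on $P$, i.e. $i_{t+1} - i_t \ge 2$ for every $t$. Indeed, if $v_{i_t}$ and $v_{i_t+1}$ were both adjacent to $u$, then inserting $u$ in the middle gives $v_1 \cdots v_{i_t}\, u\, v_{i_t+1}\cdots v_{k-1}$, a path on $k$ vertices, a contradiction. Consequently $i_s - i_1 \ge 2(s-1)$. Next I would bound the extreme neighbors using $Q$. Reversing $Q$ and attaching it at $u$, the path $w_j \cdots w_2\, u\, v_{i_1} v_{i_1+1}\cdots v_{k-1}$ has $j + (k - i_1)$ vertices, so $P_k$-freeness forces $j + k - i_1 \le k-1$, that is $i_1 \ge j+1$. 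Symmetrically, $w_j \cdots w_2\, u\, v_{i_s} v_{i_s-1}\cdots v_1$ has $j + i_s$ vertices, giving $i_s \le k-1-j$.

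Combining the three inequalities yields $2(s-1) \le i_s - i_1 \le (k-1-j)-(j+1) = k-2-2j$, whence $2s + 2j \le k$ and, since $s+j$ is an integer, $s+j \le \lfloor k/2\rfloor$. The only genuinely delicate point is selecting the correct orientations when splicing: one must extend from the leftmost neighbor $v_{i_1}$ rightward and from the rightmost neighbor $v_{i_s}$ leftward (the opposite choices would produce upper bounds on $i_1$ and lower bounds on $i_s$, which are useless here), so that the resulting walks are honestly simple paths of the claimed length. Once the auxiliary paths are chosen correctly, verifying that all listed vertices are distinct and that consecutive pairs are adjacent is routine, and the degenerate case $j=1$ (where $Q$ is just $u$) is covered verbatim. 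I expect the main obstacle to be precisely this bookkeeping of traversal direction and the confirmation that each candidate is a genuine path, rather than any deeper combinatorial difficulty.
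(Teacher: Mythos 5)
Your proof is correct. The paper does not prove this lemma itself but imports it from Katona--Xiao \cite{2023-Katona-withoutlongpath}, and your argument is the standard one for such statements: the non-consecutiveness of the neighbours $v_{i_1},\dots,v_{i_s}$ gives $i_s-i_1\ge 2(s-1)$, the two spliced paths through $Q$ give $i_1\ge j+1$ and $i_s\le k-1-j$, and combining yields $2(s+j)\le k$; all three auxiliary paths are genuinely simple and the computation is sound.
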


   \begin{lemma}[\cite{2023-Katona-withoutlongpath}] \label{lemma_2}
      Let $G$ be a connected graph on $k$ vertices with no Hamiltonian path but with a path $P=(v_1,v_2,\cdots,v_{k-1})$ on $k-1$ vertices. Suppose the vertex $u\in V(G)\setminus V(P)$ is adjacent to $s$ vertices of $P$, that is $N_P(u)=\{v_{i_1},v_{i_2},\cdots,v_{i_s}\},i_1<i_2<\cdots<i_s$. Then \\
      (1) $s\leq \lfloor\frac{k}{2}\rfloor-1$;\\
      (2) there are no edges of the form $v_{i_j+1}v_{i_r+1},v_{i_j-1}v_{i_r-1},v_1v_{i_t+1}$ or $v_{i_t-1}v_{k-1}$, $1\leq j<r\leq s, 1\leq t\leq s$.
   \end{lemma}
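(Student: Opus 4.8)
The plan is to exploit the fact that, since $P$ has $k-1$ vertices and $G$ has exactly $k$ vertices, the set $V(G)\setminus V(P)$ consists of the single vertex $u$; consequently any spanning path of $G$ is a copy of $P_k$. So every assertion can be proved by contradiction: assume a forbidden edge exists (for part~(2)), or that $s$ is too large (for part~(1)), and exhibit a spanning path of $G$, contradicting the hypothesis that $G$ has no Hamiltonian path. As a preliminary observation I would record that $u$ is adjacent to neither $v_1$ nor $v_{k-1}$: if $uv_1\in E(G)$ then $(u,v_1,v_2,\ldots,v_{k-1})$ is a spanning path, and symmetrically for $v_{k-1}$. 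Hence $2\le i_1$ and $i_s\le k-2$, which guarantees that all the vertices $v_{i_t\pm 1}$ referenced below actually lie on $P$.

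For part~(1), I would first show that no two neighbours of $u$ are consecutive on $P$. Indeed, if $v_i$ and $v_{i+1}$ were both adjacent to $u$, then $(v_1,\ldots,v_i,u,v_{i+1},\ldots,v_{k-1})$ would be a spanning path. Therefore consecutive indices satisfy $i_{t+1}-i_t\ge 2$, whence $k-2\ge i_s\ge i_1+2(s-1)\ge 2s$, giving $s\le (k-2)/2$ and so $s\le\lfloor k/2\rfloor-1$. (Alternatively, this is immediate from Lemma~\ref{lemma_1}: since $u$ is the only vertex outside $P$, a longest path of $G\setminus V(P)$ starting at $u$ has $j=1$ vertex, so $s+1\le\lfloor k/2\rfloor$.)

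For part~(2), the engine is a rotation/detour argument producing a spanning path from each forbidden edge. Consider first an edge $v_{i_j+1}v_{i_r+1}$ with $j<r$. Using the edges $uv_{i_j}$, $uv_{i_r}$ and $v_{i_j+1}v_{i_r+1}$, the sequence
\[
(v_1,\ldots,v_{i_j},\,u,\,v_{i_r},v_{i_r-1},\ldots,v_{i_j+1},\,v_{i_r+1},v_{i_r+2},\ldots,v_{k-1})
\]
is a spanning path: the reversed block $v_{i_r},\ldots,v_{i_j+1}$ fills in exactly the vertices strictly between $v_{i_j}$ and $v_{i_r+1}$, so every vertex of $P$ together with $u$ appears exactly once. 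This contradicts the absence of a Hamiltonian path. The edge $v_{i_j-1}v_{i_r-1}$ is handled by the mirror-image construction (reading $P$ from $v_{k-1}$ to $v_1$), namely $(v_{k-1},\ldots,v_{i_r},u,v_{i_j},v_{i_j+1},\ldots,v_{i_r-1},v_{i_j-1},v_{i_j-2},\ldots,v_1)$. For the single-neighbour edges I would use a one-detour variant: from $v_1v_{i_t+1}$ the sequence $(u,v_{i_t},v_{i_t-1},\ldots,v_1,v_{i_t+1},v_{i_t+2},\ldots,v_{k-1})$ is spanning, and from $v_{i_t-1}v_{k-1}$ the sequence $(u,v_{i_t},v_{i_t+1},\ldots,v_{k-1},v_{i_t-1},v_{i_t-2},\ldots,v_1)$ is spanning; both contradict the hypothesis.

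The hard part is not the individual constructions---each is a routine ``reroute through $u$'' detour---but making the index bookkeeping airtight, in particular checking that each constructed sequence is a genuine path visiting all $k$ vertices exactly once even in the degenerate cases where consecutive indices coincide (for instance $i_r=i_j+1$, so that the reversed block $v_{i_r},\ldots,v_{i_j+1}$ collapses to a single vertex, or $i_t=2$, where the descending block $v_{i_t-1},\ldots,v_1$ is short). The endpoint constraints $2\le i_1$ and $i_s\le k-2$ established at the outset are exactly what is needed to keep every referenced vertex on $P$; once that is verified, each contradiction is immediate.
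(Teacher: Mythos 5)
Your proof is correct and complete: each of the four detour constructions is a genuine Hamiltonian path of $G$ (every edge used is either a path edge, an edge $uv_{i_t}$ with $v_{i_t}\in N_P(u)$, or the assumed forbidden edge, and every one of the $k$ vertices appears exactly once), and your preliminary observations $2\le i_1$, $i_s\le k-2$ and the non-consecutivity of $N_P(u)$ make the index bookkeeping sound, including the degenerate case $i_r=i_j+1$; part (1) then follows either by the counting $2s\le i_s\le k-2$ or directly from Lemma \ref{lemma_1} with $j=1$, both of which are valid. Note that the present paper does not prove this lemma at all---it is imported from Katona and Xiao \cite{2023-Katona-withoutlongpath}---so there is no internal proof to compare against; your ``reroute through $u$'' argument is the standard one for statements of this type and is exactly what such a proof should look like.
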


   By Lemma \ref{lemma_2}, if $G$ is an $n$-vertex $P_k$-free connected graph with a path $P_{k-1}$, then $e(u,V(P_{k-1}))$ $\leq \lfloor\frac{k}{2}\rfloor-1$ for each $u\in V(G)\setminus V(P_{k-1})$. We  partition the vertices of $V(G)\setminus V(P_{k-1})$ into mutually disjoint sets in two different ways. \\
   (1) Let $u\in A_i$, if $e(u,V(P_{k-1}))=i,0\leq i\leq \lfloor\frac{k}{2}\rfloor-1$.\\
   (2) Let $u\in B_i$, if $u\in A_{i_1}$ and $u$ is connected to vertices in $A_{i_2},A_{i_3},\cdots, A_{i_r}$ by a path, then $i=\max\{i_1,i_2,\cdots,i_r\}$.

   One can easily verify that $A_i\cap A_j=\emptyset, B_i\cap B_j=\emptyset$ for $0\leq i\neq j\leq \lfloor\frac{k}{2}\rfloor-1$ and $$\sum_{i=0}^{\lfloor k/2\rfloor-1}A_i=\sum_{i=0}^{\lfloor k/2\rfloor-1}B_i=V(G)\setminus V(P_{k-1}).$$ Moreover, $B_0$ is empty since $G$ is connected, and there are no edges between $B_i$ and $B_j$ for $1\leq i\neq j\leq \lfloor\frac{k}{2}\rfloor-1$.

   \begin{lemma}[\cite{2023-Katona-withoutlongpath}] \label{lemma_3}
      Let $G$ be an $n$-vertex $P_k$-free connected graph with a path on $k-1$ vertices. Let $A_i$ and $B_i$ are the sets of vertices defined above. Then
\begin{equation*}
e(G)-e(G[V(P_{k-1})])\leq \Big(\Big\lfloor\frac{k}{2}\Big\rfloor-1\Big)\Big|A_{\lfloor k/2\rfloor-1}\Big|+\sum_{l=0}^{\lfloor k/2
\rfloor-2}\Big(\Big\lfloor\frac{k}{2}\Big\rfloor-\frac{3}{2}\Big)|A_l|.
\end{equation*}
\end{lemma}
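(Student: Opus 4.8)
The plan is to reduce the stated inequality to a bound on the edges lying inside $R:=V(G)\setminus V(P_{k-1})$, to prove that component by component, and to power the component estimate by a self‑contained inequality about longest paths. Write $\ell=\lfloor k/2\rfloor$ and, for $u\in R$, set $l(u)=e(u,V(P_{k-1}))$, so that $u\in A_{l(u)}$ and $j_u$ denotes the number of vertices of a longest path of $G[R]$ starting at $u$. Since $e(G)-e(G[V(P_{k-1})])=\sum_{u\in R}l(u)+e(G[R])$ and $\sum_{u\in R}l(u)=(\ell-1)|A_{\ell-1}|+\sum_{l=0}^{\ell-2}l\,|A_l|$, the conclusion is equivalent to
\begin{equation*}
e(G[R])\le \sum_{l=0}^{\ell-2}\Big(\ell-\tfrac32-l\Big)|A_l|.
\end{equation*}
First I would record two consequences of Lemma~\ref{lemma_1}: a vertex of $A_{\ell-1}$ has a longest path of one vertex in $G[R]$, hence is isolated there; and, applying Lemma~\ref{lemma_1} to a vertex realizing the maximum $A$‑index $i_C\ge1$ of a nontrivial component $C$, together with the elementary reachability fact that in a connected graph the longest path from any vertex has at least $\lceil(q+1)/2\rceil$ vertices (with $q$ the order of a longest path of the component), every nontrivial component $C$ of $G[R]$ satisfies $q_C\le 2\ell-3$ and has all its vertices in $A_0\cup\cdots\cup A_{\ell-2}$. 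As the $A_{\ell-1}$‑vertices are isolated, $e(G[R])=\sum_{C}e(C)$ over the nontrivial components, and since each remaining coefficient $\ell-\tfrac32-l(u)$ is nonnegative it suffices to prove, for every nontrivial component,
\begin{equation*}
e(C)+\sum_{u\in C}l(u)\le \Big(\ell-\tfrac32\Big)|C|. \tag{$\star$}
\end{equation*}

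The engine for $(\star)$ is the following claim, valid for an arbitrary graph $C$:
\begin{equation*}
2\,e(C)\le \sum_{u\in V(C)}\big(j_u-1\big).
\end{equation*}
I would prove it by induction on $|V(C)|$, additivity over components reducing to connected $C$. Fix a longest path $Q=(w_1,\dots,w_q)$. If $\deg(w_1)+\deg(w_q)\ge q$, a standard rotation argument places the vertices of $Q$ on a common cycle, and connectivity forces $q=|V(C)|$, i.e.\ $C$ is Hamiltonian; then $j_u=|V(C)|$ for all $u$ and the claim reads $2e(C)\le|V(C)|\big(|V(C)|-1\big)$, which is trivial. Otherwise $\deg(w_1)+\deg(w_q)\le q-1$, so some endpoint, say $w_1$, has $2\deg(w_1)\le q-1=j_{w_1}-1$; deleting $w_1$, using $j_u^{C-w_1}\le j_u^{C}$ and the induction hypothesis gives $2e(C)=2e(C-w_1)+2\deg(w_1)\le \sum_{u\ne w_1}(j_u^{C}-1)+(j_{w_1}-1)$, which is exactly the claim.

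Finally I would combine the claim with Lemma~\ref{lemma_1} to obtain $(\star)$. For each $u\in C$ one has $l(u)\le \ell-j_u$ when $j_u\le\ell$, while $j_u>\ell$ forces $l(u)=0$; splitting $C$ according to whether $j_u\le\ell$, the left side of $(\star)$ is at most
\begin{equation*}
\sum_{u:\,j_u\le\ell}\Big(\ell-\frac{j_u+1}{2}\Big)+\sum_{u:\,j_u>\ell}\frac{j_u-1}{2}.
\end{equation*}
Each summand is at most $\ell-\tfrac32$: for $j_u\le\ell$ because every vertex of a nontrivial component has $j_u\ge2$, and for $j_u>\ell$ because $j_u\le q_C\le 2\ell-3$. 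Summing over the $|C|$ vertices yields $(\star)$, and hence the lemma.

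The main obstacle is the graph‑theoretic claim $2e(C)\le\sum_u(j_u-1)$. The naive route---bounding $e(C)$ by the Erd\H{o}s--Gallai estimate of Theorem~\ref{th_1} and each $l(u)$ through the reachability bound $j_u\ge\lceil(q+1)/2\rceil$---loses a linear term of size $\tfrac12|C|$ exactly when a longest path of $C$ has an odd number of vertices, and no constant‑order refinement (such as the connected bound of Theorem~\ref{th_2}) repairs this for large $|C|$. It is precisely the dichotomy ``Hamiltonian versus a low‑degree endpoint of a longest path'' that recovers the missing $\tfrac12|C|$; once that claim is in hand, the rest is bookkeeping with Lemmas~\ref{lemma_1} and~\ref{lemma_2}.
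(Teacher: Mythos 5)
Your proof is correct, and there is nothing in the paper to compare it against directly: Lemma \ref{lemma_3} is quoted verbatim from Katona and Xiao \cite{2023-Katona-withoutlongpath}, and this paper supplies no proof of it. Checking your argument on its own terms: the reduction to $e(G[R])\le\sum_{l=0}^{\ell-2}\big(\ell-\tfrac{3}{2}-l\big)|A_l|$ is an exact rewriting; the facts that $A_{\ell-1}$-vertices are isolated in $G[R]$, that every nontrivial component $C$ has $i_C\ge 1$ (connectivity of $G$, i.e.\ the paper's remark that $B_0=\emptyset$) and hence $q_C\le 2\ell-3$, and the final per-vertex bookkeeping giving $(\star)$ are all sound. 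The genuinely new ingredient is your claim $2e(C)\le\sum_{u}(j_u-1)$, which is a vertex-local strengthening of Theorem \ref{th_1} (summing $j_u\le q_C$ over all vertices recovers Erd\H{o}s--Gallai for $P_{q_C+1}$-free graphs), and your induction via the dichotomy ``$\deg(w_1)+\deg(w_q)\ge q$ forces a cycle through $V(Q)$, hence Hamiltonicity'' versus ``some endpoint satisfies $2\deg(w_1)\le q-1=j_{w_1}-1$, delete it'' is valid. You correctly diagnose why something like this is necessary: the naive route (Erd\H{o}s--Gallai on $C$ plus the uniform reachability bound $j_u\ge\lceil(q_C+1)/2\rceil$ fed into Lemma \ref{lemma_1}) gives only $(\ell-1)|C|$ when $q_C$ is odd, losing $\tfrac{1}{2}|C|$; your claim repairs this by coupling the edge count to the individual $j_u$'s, so that components with many edges automatically have large $j_u$'s and hence small attachment numbers $l(u)$. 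One cosmetic remark: in Case 1 of the claim, when $q=2$ the component is $K_2$ and the ``Hamiltonian cycle'' is degenerate, but the only facts you use --- $V(C)=V(Q)$ and $j_u=|V(C)|$ for all $u$, whence $2e(C)\le |V(C)|\big(|V(C)|-1\big)$ --- still hold, so nothing breaks.
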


   Now suppose $G$ is an $n$-vertex $\{P_k,H\}$-free connected graph, and $G$ contains a path on $k-1$ vertices. The following lemma estimates the number of  edges in $G[V(P_{k-1})]$.
   \begin{lemma} \label{lemma_4}
      For a  graph $H$ with $\chi(H)>2$ and sufficiently large $n$,
      let $G$ be an $n$-vertex $\{P_k,H\}$-free connected graph containing a $P_{k-1}$. Let $\mathcal{H}$ be the family of graphs obtained from $H$ by deleting a color class of $H$. If $\big|A_{\lfloor k/2\rfloor-1}\big|=\Theta(n)$, then
\begin{equation*}
e(G[V(P_{k-1})])\leq ex\Big(\Big\lfloor\frac{k}{2}\Big\rfloor-1,\mathcal{H}\Big)+\Big(\Big\lfloor\frac{k}{2}\Big\rfloor-1\Big)\Big\lceil \frac{k}{2}\Big\rceil+c,
\end{equation*}
where $c=0$ if $k$ is even and $c=1$ otherwise.
   \end{lemma}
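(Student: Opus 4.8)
The plan is to isolate a single set $S$ of $\lfloor k/2\rfloor-1$ path vertices that carries all of the ``$\mathcal H$-dense'' part of $G[V(P_{k-1})]$ and to show that the remaining $\lceil k/2\rceil$ path vertices span at most $c$ edges. First I would exploit the hypothesis $|A_{\lfloor k/2\rfloor-1}|=\Theta(n)$. Since a vertex of $A_{\lfloor k/2\rfloor-1}$ realizes one of at most $\binom{k-1}{\lfloor k/2\rfloor-1}$ possible neighborhoods on $P_{k-1}$, pigeonhole produces a fixed $S\subseteq V(P_{k-1})$ with $|S|=\lfloor k/2\rfloor-1$ and a set $U\subseteq A_{\lfloor k/2\rfloor-1}$ with $|U|=\Theta(n)$ such that the neighborhood of every $u\in U$ on $P_{k-1}$ equals $S$. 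Applying Lemma~\ref{lemma_1} with $s=\lfloor k/2\rfloor-1$ forces $j\le 1$, so each $u\in U$ has no neighbor outside $V(P_{k-1})$; hence $N_G(u)=S$, which shows at once that $U$ is an independent set each of whose vertices is complete to $S$. Finally, if some $u$ were adjacent to two consecutive vertices $v_i,v_{i+1}$, then $v_1\cdots v_i\,u\,v_{i+1}\cdots v_{k-1}$ would be a $P_k$; thus $S$ is an independent set of the path, and by the same extension argument the endpoints $v_1,v_{k-1}$ must lie in $R:=V(P_{k-1})\setminus S$.

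Write $e(G[V(P_{k-1})])=e(G[S])+e(S,R)+e(G[R])$, where $|R|=\lceil k/2\rceil$. The middle term is at most $|S|\,|R|=(\lfloor k/2\rfloor-1)\lceil k/2\rceil$. For the first term I claim that $G[S]$ is $\mathcal H$-free: if $G[S]$ contained a copy of $H'=H-C$ for a color class $C$, then, since $|U|\ge|C|$, I could place $H'$ on that copy and $C$ on any $|C|$ vertices of $U$; because $U$ is independent and complete to $S\supseteq V(H')$, all edges of $H$ (those inside $H'$ and those between $C$ and $H'$) are present while the non-edges inside $C$ are respected, yielding $H\subseteq G$, a contradiction. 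Hence $e(G[S])\le ex(\lfloor k/2\rfloor-1,\mathcal H)$, and it remains only to prove $e(G[R])\le c$.

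To bound $e(G[R])$ I would use $P_k$-freeness through \emph{alternating $S$--$U$ paths}: as $U$ is complete to $S$ and $|U|$ is large, any ordering $s_1,\dots,s_t$ of distinct vertices of $S$ threads into $s_1u_1s_2u_2\cdots u_{t-1}s_t$ with fresh $u_i\in U$, and one further $U$-vertex may be appended at either end. Given an edge $xy$ of $G[R]$, I walk along $P_{k-1}$ from $x$ in a direction that avoids $y$ and contains an $S$-vertex, stopping at the first such vertex $v_c$; the traversed path vertices lie in $R$, so if neither endpoint of $xy$ is adjacent to $S$ this walk contributes extra $R$-vertices, while if some endpoint is adjacent to $S$ the edge attaches to $S$ directly. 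In either case at least the two $R$-vertices $x,y$ hang off an alternating path through all of $S$, giving a path on at least $2+2(\lfloor k/2\rfloor-1)=2\lfloor k/2\rfloor$ vertices. For even $k$ this is already $k$ vertices, so a single edge of $G[R]$ creates a $P_k$; hence $G[R]$ is edgeless and $c=0$. For odd $k$ one edge only reaches $k-1$ vertices, but attaching a second independent $R$-edge (or the third vertex of a $P_3$ in $R$) at the opposite end of the alternating path supplies the missing vertex and again produces a $P_k$; thus $e(G[R])\le 1=c$.

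The main obstacle is the vertex-disjointness bookkeeping in this last step, particularly the odd case, where I must guarantee that the two path-walks attaching the $R$-edges, the vertices of $S$ consumed by the alternating middle, and the appended $U$-vertices are all distinct and enter $S$ at two distinct vertices. Here the structural constraints do the work: since $G[R]$ contains no $P_3$ in the problematic configuration, every maximal block of consecutive $R$-vertices on $P_{k-1}$ has length at most two, so a walk cannot silently swallow the second edge except by forming an even longer $R$-path (which only lengthens the final path), and Lemma~\ref{lemma_2}(2) forbids precisely the chords $v_{i_j\pm1}v_{i_r\pm1}$, $v_1v_{i_t+1}$, $v_{i_t-1}v_{k-1}$ that could otherwise create short-cuts or additional $R$-edges. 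Combined with the facts that $S$ is independent in $P_{k-1}$ and that $v_1,v_{k-1}\in R$, this pins down the positions well enough to route the two edges to opposite ends without collision and to rule out the remaining chord configurations inside $G[R]$, completing the bound $e(G[R])\le c$ and hence the lemma.
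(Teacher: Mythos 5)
Your overall skeleton coincides with the paper's: pigeonhole a fixed neighborhood $S$ with $|S|=\lfloor k/2\rfloor-1$ realized by $\Theta(n)$ vertices $U$ of $A_{\lfloor k/2\rfloor-1}$, deduce that $G[S]$ is $\mathcal H$-free by planting the missing color class in $U$, and split $e(G[V(P_{k-1})])=e(G[S])+e(S,R)+e(G[R])$ with $e(S,R)\le(\lfloor k/2\rfloor-1)\lceil k/2\rceil$. These parts are correct (for even $k$ the set $S$ is in fact forced to be $\{v_2,v_4,\dots,v_{k-2}\}$, and for odd $k$ it is one of the finitely many maximum independent sets of the inner subpath, exactly as the paper enumerates). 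Your alternating $S$--$U$ path argument also gives a clean, self-contained proof that $G[R]$ is edgeless when $k$ is even.

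The genuine gap is the odd case of the bound $e(G[R])\le 1$. Your construction needs to attach two edges of $G[R]$ (or a $P_3$ in $R$) to the two ends of an alternating path through all of $S$, entering $S$ at two \emph{distinct} vertices via walks whose intermediate $R$-vertices are pairwise disjoint and disjoint from the four edge-endpoints. This is not automatic: for instance, when $S=\{v_2,v_4,\dots,v_{k-3}\}$ the vertex $v_{k-1}$ has no path-neighbor in $S$, and for the ``switch'' sets the block $\{v_{2l+1},v_{2l+2}\}$ forces both of its vertices toward specific entry points, so two $R$-edges can compete for the same $S$-vertex. These configurations can all be resolved, but only by a case analysis you do not carry out; your closing paragraph concedes the difficulty and then appeals to Lemma \ref{lemma_2}(2) to ``rule out the remaining chord configurations,'' which is exactly what the paper does from the start. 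Indeed, Lemma \ref{lemma_2}(2) applied to the known neighborhood $S$ shows directly that every pair in $R$ except the single path edge ($v_{k-2}v_{k-1}$, resp. $v_{2l+1}v_{2l+2}$) is a forbidden chord, so $e(G[R])\le 1$ with no path-threading needed. As written, your proof of the odd case is incomplete; either finish the disjointness case analysis or replace that step by the direct application of Lemma \ref{lemma_2}(2).
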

   \begin{proof}
      Let $P_{k-1}:=(v_1,v_2,\dots,v_{k-1})$. We distinguish two different cases depending on the parity of $k$.

       {\em Case 1:} $k$ is even.
       Since for any vertex $u$ in $A_{k/2-1}$, $u$ cannot be adjacent to $v_1$, $v_{k-1}$, or consecutive vertices on $P_{k-1}$,  $N_{P_{k-1}}(u)=\{v_2,v_4,\cdots,v_{k-2}\}$. Then $A_{k/2-1}$ is  the common neighborhood of vertices in $\{v_2,v_4,\cdots,v_{k-2}\}$.
       The assumption $\big|A_{k/2-1}\big|=\Theta(n)$ implies that $G[\{v_2,v_4,\cdots,v_{k-2}\}]$ is $\mathcal{H}$-free. Otherwise, $G[\{v_2,v_4,\cdots,v_{k-2}\}]$
        contains a graph in $\mathcal{H}$, we can always find the missing color class in $A_{k/2-1}$ which forms an $H$, a contradiction. Thus $G[\{v_2,v_4,\cdots,v_{k-2}\}]$ must be $\mathcal{H}$-free. Moreover, by Lemma \ref{lemma_2}, there are no edges in $G[\{v_1,v_3,\cdots,v_{k-1}\}]$. Then $e(G[V(P_{k-1})])\leq ex\big(\frac{k}{2}-1,\mathcal{H}\big)+\big(\frac{k}{2}-1\big)\frac{k}{2}$.

       {\em Case 2:} $k$ is odd. Since for any vertex $u$ in $A_{(k-3)/2}$, $u$ cannot be adjacent to $v_1$, $v_{k-1}$, or consecutive vertices on $P_{k-1}$, $N_{P_{k-1}}(u)=\{v_2,v_4,\cdots,v_{k-3}\}$ or $N_{P_{k-1}}(u)=\{v_2,v_4,\cdots,v_{2l}, v_{2l+3},$ $v_{2l+5},\cdots,v_{k-2}\}$ where $1\leq l\leq \frac{k-5}{2}$. Let $G_1=G\setminus V(P_{k-1})$. Then
      \begin{align}
         A_{(k-3)/2}=\cup_{l=1}^{(k-5)/2}N_{G_1}^{co}\big(\big\{v_2,v_4,
         \cdots,v_{2l},v_{2l+3},v_{2l+5},\cdots,v_{k-2}\big\}\big)\nonumber\cup N_{G_1}^{co}\big(\{v_2,v_4,\cdots,v_{k-3}\}\big).\nonumber
      \end{align}
      Since $\big|A_{(k-3)/2}\big|=\Theta(n)$, we must have $|N_{G_1}^{co}(\{v_2,v_4,\cdots,v_{k-3}\})|=\Theta(n)$ or $|N_{G_1}^{co}(\{v_2,v_4,\cdots,$  $v_{2l},v_{2l+3},v_{2l+5},\cdots,v_{k-2}\})|=\Theta(n)$ for some $l$.

      For the former case, $G[\{v_2,v_4,\cdots,v_{k-3}\}]$ is $\mathcal{H}$-free. Moreover, by Lemma \ref{lemma_2}, there are no edges in $G[\{v_1,v_3,\cdots,v_{k-2}\}]$ and $v_1v_{k-1},v_3v_{k-1},\cdots,v_{k-4}v_{k-1}$ are not edges of $G$. Then
      \begin{eqnarray*}
      e(G[V(P_{k-1})])&\leq& ex\left(\frac{k-3}{2},\mathcal{H}\right)+\left(\frac{k-1}{2}\right)^2-\frac{k-3}{2}+\frac{k-3}{2}\\[2mm]
      &=&ex\left(\frac{k-3}{2},\mathcal{H}\right)+\left(\frac{k-1}{2}\right)^2\\[2mm]
      &=&ex\left(\frac{k-3}{2},\mathcal{H}\right)+\frac{k-3}{2}\frac{k+1}{2}+1.
      \end{eqnarray*}

      For the latter case, $G[\{v_2,v_4,\cdots,v_{2l},v_{2l+3},v_{2l+5},\cdots,v_{k-2}\}]$ is $\mathcal{H}$-free. Moreover, by Lemma \ref{lemma_2}, there are no edges in $G[\{v_1,v_3,\cdots,v_{2l+1},v_{2l+4},v_{2l+6},\cdots,v_{k-1}\}]$ and $v_1v_{2l+2},v_3v_{2l+2}, \cdots,$ $v_{2l-1}v_{2l+2},v_{2l+2}v_{2l+4},v_{2l+2}v_{2l+6},\cdots,v_{2l+2}v_{k-1}$ are not edges of $G$. Then
       \begin{eqnarray*}
      e(G[V(P_{k-1})])&\leq & ex\left(\frac{k-3}{2},\mathcal{H}\right)+\left(\frac{k-1}{2}\right)^2-\frac{k-3}{2}+\frac{k-3}{2}\\[2mm]
      &=& ex\left(\frac{k-3}{2},\mathcal{H}\right)+\left(\frac{k-1}{2}\right)^2\\[2mm]
      &=& ex\left(\frac{k-3}{2},\mathcal{H}\right)+\frac{k-3}{2}\frac{k+1}{2}+1.
   \end{eqnarray*} \end{proof}

   \begin{lemma}\label{lemma_5}
      Suppose $V(G)=M\cup \{u, v\}$, where $|M|=l$,  $\mathcal{H}'$ is the family of graphs obtained from $H$ by deleting two adjacent vertices in $V(H)$. If $G[M]$ is $\mathcal{H}$-free, $G$ is $H$-free and every graph in $EX(l,\mathcal{H})$ contains at least a member in $\mathcal{H}'$. Then $e(G)\leq ex(l,\mathcal{H})+2l$.
   \end{lemma}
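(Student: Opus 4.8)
The plan is to count the edges of $G$ according to the natural partition of $V(G)=M\cup\{u,v\}$, namely
\begin{equation*}
e(G)=e(G[M])+e(\{u,v\},M)+e(G[\{u,v\}]).
\end{equation*}
First I would bound each term separately. Since $G[M]$ is $\mathcal{H}$-free on $l$ vertices, $e(G[M])\le ex(l,\mathcal{H})$. Since each of $u$ and $v$ has at most $l$ neighbors in $M$, we have $e(\{u,v\},M)\le 2l$, and trivially $e(G[\{u,v\}])\le 1$ as there are only two such vertices. Adding these gives the crude bound $e(G)\le ex(l,\mathcal{H})+2l+1$, so the whole content of the lemma is to shave off the last $+1$.

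To do this, I would argue by contradiction and suppose $e(G)=ex(l,\mathcal{H})+2l+1$. Then all three of the above inequalities must hold with equality; in particular $e(G[M])=ex(l,\mathcal{H})$, both $u$ and $v$ are adjacent to \emph{every} vertex of $M$, and $uv\in E(G)$. Since $G[M]$ is $\mathcal{H}$-free with exactly $ex(l,\mathcal{H})$ edges, we have $G[M]\in EX(l,\mathcal{H})$, and the hypothesis then guarantees that $G[M]$ contains some $H''\in\mathcal{H}'$, i.e.\ a copy of $H-\{x,y\}$ for a pair of adjacent vertices $x,y\in V(H)$.

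The crux is to assemble a forbidden copy of $H$ from this configuration. Fix an embedding $\phi$ of $H''=H-\{x,y\}$ into $M$ and extend it to a map $\psi$ on all of $V(H)$ by setting $\psi(x)=u$ and $\psi(y)=v$; this is injective since $u,v\notin M$ and $u\neq v$. Every edge of $H$ among $V(H)\setminus\{x,y\}$ is preserved because $\phi$ embeds $H''$; every edge joining $x$ (respectively $y$) to $V(H)\setminus\{x,y\}$ is realized because $u$ (respectively $v$) is adjacent to all of $M$ and $\phi$ maps into $M$; and the edge $xy$ is realized by $uv\in E(G)$. Hence $\psi$ is an embedding of $H$ into $G$, contradicting that $G$ is $H$-free. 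This contradiction forces $e(G)\le ex(l,\mathcal{H})+2l$.

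I expect the only delicate point to be the bookkeeping in the last step: one must verify that $u$ and $v$ may legitimately play the roles of the two deleted adjacent vertices, which is exactly where both hypotheses are used—that $x,y$ are adjacent in $H$ (so that the single available edge $uv$ suffices for the $xy$-edge) and that $G[M]$ contains a member of $\mathcal{H}'$ (so that only two vertices need to be supplied from outside $M$). The rest is routine edge counting.
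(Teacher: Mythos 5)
Your proposal is correct and follows essentially the same route as the paper: bound $e(G[M])\le ex(l,\mathcal{H})$, observe that $e(G)=ex(l,\mathcal{H})+2l+1$ forces $G\cong T\vee K_2$ with $T\in EX(l,\mathcal{H})$, and derive a copy of $H$ from a member of $\mathcal{H}'$ in $T$ together with $u,v$. The paper states this very tersely; your explicit construction of the embedding of $H$ is just a more detailed write-up of the same contradiction.
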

   \begin{proof}
       Since $G[M]$ is $\mathcal{H}$-free, $e(G[M])\leq ex(l,\mathcal{H})$. If $e(G)=ex(l,\mathcal{H})+2l+1$, then $G\cong T\bigvee K_2$ where $T$ is a graph in $EX(l,\mathcal{H})$. However, since every graph in $EX(l,\mathcal{H})$ contains at least a member in $\mathcal{H}'$, $G$ must contains  $H$ as a subgraph, a contradiction.
   \end{proof}

   \begin{lemma}\label{lemma_6}
       For odd $k$, let $G$ be the graph in Lemma \ref{lemma_4} and $\big|A_{(k-3)/2}\big|=\Theta(n)$, $\mathcal{H}'$ be the family of graphs obtained from $H$ by deleting two adjacent vertices in $V(H)$. If every graph in $EX(\frac{k-3}{2},\mathcal{H})$ contains at least a member in $\mathcal{H}'$, then
\begin{equation*}
e(G[V(P_{k-1})])\leq ex\left(\frac{k-3}{2},\mathcal{H}\right)+\frac{k-3}{2}\frac{k+1}{2}
\end{equation*}
for sufficiently large $n$.
   \end{lemma}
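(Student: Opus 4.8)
The plan is to sharpen by one the final estimate obtained in the proof of Lemma \ref{lemma_4} (the odd case), the extra edge being removed exactly by an application of Lemma \ref{lemma_5}. First I would recall the structure exposed in the proof of Lemma \ref{lemma_4}: since $\big|A_{(k-3)/2}\big|=\Theta(n)$, there is a dominant neighborhood $S\subseteq V(P_{k-1})$ with $|S|=\frac{k-3}{2}$ whose common neighborhood in $G_1=G\setminus V(P_{k-1})$ has size $\Theta(n)$, and for this $S$ the induced graph $G[S]$ is $\mathcal{H}$-free. In the ``former'' subcase $S=\{v_2,v_4,\dots,v_{k-3}\}$, and in the ``latter'' subcase $S=\{v_2,\dots,v_{2l},v_{2l+3},\dots,v_{k-2}\}$. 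In both subcases the complementary independent set $I$ (the odd-indexed vertices $\{v_1,v_3,\dots,v_{k-2}\}$, respectively $\{v_1,\dots,v_{2l+1},v_{2l+4},\dots,v_{k-1}\}$) has size $\frac{k-1}{2}$, and there is a distinguished vertex $w$ lying outside both $S$ and $I$ (namely $v_{k-1}$, respectively $v_{2l+2}$) whose only permitted neighbor inside $I$ is its path-neighbor $w'$ (namely $v_{k-2}$, respectively $v_{2l+1}$); all of the forbidden edges listed in Lemma \ref{lemma_4} are incident to $w$.

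Next I would partition $V(P_{k-1})=\big(S\cup\{w,w'\}\big)\cup\big(I\setminus\{w'\}\big)$ and count the edges of $G[V(P_{k-1})]$ across this partition. Applying Lemma \ref{lemma_5} with $M=S$, $l=\frac{k-3}{2}$ and the two extra vertices $w,w'$ gives $e\big(G[S\cup\{w,w'\}]\big)\leq ex\big(\frac{k-3}{2},\mathcal{H}\big)+(k-3)$; its hypotheses hold because $G[S]$ is $\mathcal{H}$-free, $G$ is $H$-free, and by assumption every graph in $EX\big(\frac{k-3}{2},\mathcal{H}\big)$ contains a member of $\mathcal{H}'$. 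This is precisely the step that saves one edge: the cruder estimate of Lemma \ref{lemma_4} allowed $G[S]$ to be extremal, $w$ and $w'$ each fully joined to $S$, and the path-edge $ww'$ simultaneously, whereas Lemma \ref{lemma_5} rules out this configuration and forbids the last unit.

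It then remains to bound the edges incident to $I\setminus\{w'\}$. Since $I$ is independent there are no edges inside $I\setminus\{w'\}$ and none from $I\setminus\{w'\}$ to $w'$; since every vertex of $I\setminus\{w'\}$ is a forbidden neighbor of $w$ there are no edges to $w$; and the edges to $S$ number at most $|I\setminus\{w'\}|\cdot|S|=\big(\frac{k-3}{2}\big)^2$. Summing the two contributions yields
\begin{equation*}
e\big(G[V(P_{k-1})]\big)\leq ex\Big(\tfrac{k-3}{2},\mathcal{H}\Big)+(k-3)+\Big(\tfrac{k-3}{2}\Big)^2=ex\Big(\tfrac{k-3}{2},\mathcal{H}\Big)+\frac{k-3}{2}\cdot\frac{k+1}{2},
\end{equation*}
as claimed.

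The main obstacle is bookkeeping: one must verify in each of the two subcases that $w$ is genuinely forbidden to every vertex of $I\setminus\{w'\}$ (so that its edge count vanishes) and that $w'$ is the unique neighbor of $w$ inside $I$, so that the chosen partition lets Lemma \ref{lemma_5} extract exactly the single edge that the argument of Lemma \ref{lemma_4} over-counted. Identifying $w$ and $w'$ in the ``latter'' subcase, where the special vertex is the interior vertex $v_{2l+2}$ rather than the endpoint $v_{k-1}$, is the point requiring the most care, though the count is structurally identical once the correct $w$ and $w'$ are pinned down.
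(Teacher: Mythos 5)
Your proposal is correct and follows essentially the same route as the paper: identify the $\mathcal{H}$-free set $S$ of size $\frac{k-3}{2}$ from the dominant common neighborhood, apply Lemma \ref{lemma_5} to $S$ together with the two adjacent vertices $v_{k-2},v_{k-1}$ (resp.\ $v_{2l+1},v_{2l+2}$) to save the one edge, and use Lemma \ref{lemma_2} to kill all edges inside the complementary independent set and all edges from it to the distinguished vertex, leaving only the $\big(\frac{k-3}{2}\big)^2$ crossing edges. The bookkeeping in both subcases checks out, so nothing further is needed.
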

   \begin{proof}
As in the proof of Lemma \ref{lemma_4}, we have  \begin{align}
A_{(k-3)/2}=\cup_{l=1}^{(k-5)/2}N_{G_1}^{co}\big(\{v_2,v_4,\cdots,v_{2l},v_{2l+3},v_{2l+5},
         \cdots, v_{k-2}\}\big)\nonumber\cup N_{G_1}^{co}\big(\{v_2,v_4,\cdots,v_{k-3}\}\big).\nonumber
      \end{align}
The assumption $|A_{(k-3)/2}|=\Theta(n)$ implies that $|N_{G_1}^{co}(\{v_2,v_4,\cdots,v_{k-3}\})|=\Theta(n)$ or
there exists an $l$ with $1\leq l\leq \frac{k-5}{2}$ such that $|N_{G_1}^{co}(\{v_2,v_4,\cdots,v_{2l},v_{2l+3},v_{2l+5},\cdots,v_{k-2}\})|=\Theta(n)$.

       If $|N_{G_1}^{co}(\{v_2,v_4,\cdots,v_{k-3}\})|=\Theta(n)$, then $G[\{v_2,v_4,\cdots,v_{k-3}\}]$ is $\mathcal{H}$-free,  and $G[\{v_2,v_4,\\\cdots,v_{k-3},v_{k-2},v_{k-1}\}]$ is $H$-free. By Lemma \ref{lemma_5},
\begin{equation*}
e\big(G[\{v_2,v_4,\cdots,v_{k-3},v_{k-2},v_{k-1}\}]\big)\leq ex\Big(\frac{k-3}{2},\mathcal{H}\Big)+k-3.
\end{equation*}
        Moreover, by Lemma \ref{lemma_2}, there are no edges in $G[\{v_1,v_3,\cdots,v_{k-2}\}]$ and $v_1v_{k-1},v_3v_{k-1},\cdots,\\v_{k-4}v_{k-1}$  are not edges of $G$. Then
        \begin{eqnarray*}
        e(G[V(P_{k-1})])&\leq &ex\left(\frac{k-3}{2},\mathcal{H}\right)+\left(\frac{k-3}{2}\right)^2+k-3\\[2mm]
        &=& ex\left(\frac{k-3}{2},\mathcal{H}\right)+\frac{k-3}{2}\frac{k+1}{2}.
        \end{eqnarray*}

       If $|N_{G_1}^{co}\big(\{v_2,v_4,\cdots,v_{2l},v_{2l+3},v_{2l+5},\cdots,v_{k-2}\}\big)|=\Theta(n)$ for some $l$, then $G[\{v_2,v_4,\cdots,$ $v_{2l},$ $v_{2l+3},v_{2l+5},\cdots,v_{k-2}\}]$ is $\mathcal{H}$-free,   and $G[\{v_2,v_4,\cdots,v_{2l},v_{2l+3},v_{2l+5},\cdots,v_{k-2},v_{2l+1},v_{2l+2}\}]$ is $H$-free. By Lemma \ref{lemma_5},
 \begin{equation*}
 e\big(G[\{v_2,v_4,\cdots,v_{2l},v_{2l+3},v_{2l+5},\cdots,v_{k-2},v_{2l+1},v_{2l+2}\}]\big)\leq ex\left(\frac{k-3}{2},\mathcal{H}\right)+k-3.
 \end{equation*}
 Moreover, by Lemma \ref{lemma_2}, there are no edges in $G[\{v_1,v_3,\cdots,v_{2l+1},v_{2l+4},v_{2l+6},\cdots,v_{k-1}\}]$ and $v_1v_{2l+2},v_3v_{2l+2},\cdots,v_{2l-1}v_{2l+2},v_{2l+2}v_{2l+4},v_{2l+2}v_{2l+6},\cdots,v_{2l+2}v_{k-1}$  are not edges of $G$. Then
       \begin{eqnarray*}
       e(G[V(P_{k-1})])&\leq &ex\left(\frac{k-3}{2},\mathcal{H}\right)+\left(\frac{k-3}{2}\right)^2+k-3\\[2mm]
       &=&ex\left(\frac{k-3}{2},\mathcal{H}\right)+\frac{k-3}{2}\frac{k+1}{2}.
       \end{eqnarray*}
 \end{proof}
 Let $M_t$ be a matching on $t$ edges and $S_t$ be a star on $t+1$ vertices. In 1972, Abbott, Hanson and Sauer\cite{1972-Abbott-Intersection} determined $ex\big(n,\{M_t,S_t\}\big)$.
   \begin{theorem}[\cite{1972-Abbott-Intersection}]\label{lemma_7}
      We omit isolated vertices in extremal graphs. If $k$ is odd and $n\geq 2t$, $ex(n,\{M_t,S_t\})=t^2-t$ and the extremal graph is $K_t \cup K_t$. When $k$ is even and $n\geq 2t-1$, $ex(n,\{M_t,S_t\})=t^2-\frac{3}{2}t$ and the extremal graphs are all the graphs with $2t-1$ vertices, $t^2-\frac{3}{2}t$ edges and maximum degree $t-1$.
   \end{theorem}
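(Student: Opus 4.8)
The plan is to translate the forbidden-subgraph condition into two numerical constraints and then run an extremal argument driven by the maximum-matching structure. A graph $G$ (with isolated vertices discarded) is $\{M_t,S_t\}$-free precisely when $\nu(G)\le t-1$ (no $t$ independent edges) and $\Delta(G)\le t-1$ (no vertex of degree $t$), where $\nu$ denotes the matching number. For the lower bounds, when $t$ is odd the graph $K_t\cup K_t$ has $\Delta=t-1$, matching number $2\cdot\frac{t-1}{2}=t-1$, and $t(t-1)=t^2-t$ edges; when $t$ is even, any graph on $2t-1$ vertices with $\Delta=t-1$ and the maximum possible $\big\lfloor\frac{(2t-1)(t-1)}{2}\big\rfloor=t^2-\frac32 t$ edges (such near-regular graphs exist) automatically has $\nu\le t-1$, since it has only $2t-1$ vertices. (Throughout I read the parity hypothesis as a condition on $t$.)

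For the upper bound I would invoke the Gallai--Edmonds decomposition $V(G)=D\cup A\cup C$, where the components $D_1,\dots,D_c$ of $G[D]$ are factor-critical with $n_i=|D_i|$ odd, $G[C]$ has a perfect matching, and $\nu(G)=\sum_i\frac{n_i-1}{2}+\frac{|C|}{2}+|A|$. Writing $\mu_i=\frac{n_i-1}{2}$ and using $\Delta\le t-1$ blockwise, the key per-block estimate is $e(D_i)\le t\mu_i$: if $n_i\ge t$ this follows from $e(D_i)\le\frac{n_i(t-1)}{2}$ together with $\frac{n_i(t-1)}{n_i-1}\le t\iff n_i\ge t$, while if $n_i\le t-1$ it follows from $e(D_i)\le\binom{n_i}{2}$ and $n_i\le t-1$. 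Bounding $e(C)\le(t-1)\frac{|C|}{2}$ and the edges incident to $A$ by $(t-1)|A|$, and comparing with the matching formula, I obtain the master inequality $e(G)\le t\,\nu(G)-\Lambda$, where $\mathrm{def}_i:=t\mu_i-e(D_i)\ge0$ and $\Lambda:=\sum_i\mathrm{def}_i+\frac{|C|}{2}+|A|\ge0$ collects all the slack. Since $\nu(G)\le t-1$, this already gives $e(G)\le t^2-t$, which settles the odd case; tracking equality forces $A=C=\emptyset$, every nontrivial $D_i$ to be $(t-1)$-regular with $n_i=t$ (hence $K_t$, which requires $t$ odd), and exactly two such blocks, i.e. $G=K_t\cup K_t$.

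The even case is where the real work lies and is the step I expect to be the main obstacle. Here $n_i=t$ is impossible (the $D_i$ have odd order), so I must quantify the forced loss $\Lambda$, and a parity/floor refinement of the block estimate is essential. For a factor-critical $D_i$ with $n_i\ge t+1$ the product $n_i(t-1)$ is odd, so $e(D_i)\le\big\lfloor\frac{n_i(t-1)}{2}\big\rfloor$ yields the integer deficiency $\mathrm{def}_i\ge\frac{n_i-t+1}{2}\ge1$, while for $3\le n_i\le t-1$ the cruder bound $e(D_i)\le\binom{n_i}{2}$ gives $\mathrm{def}_i\ge\mu_i$. Expressing each $\mathrm{def}_i$ as a function of $\mu_i$ and minimizing $\Lambda$ over all admissible decompositions with $\sum_i\mu_i+\frac{|C|}{2}+|A|=t-1$ becomes a small integer optimization: at most one block can be ``large'' ($\mu_i\ge t/2$) for budget reasons, and a short case split on whether such a block is present shows $\Lambda\ge\frac{t}{2}$. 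This gives $e(G)\le t(t-1)-\frac{t}{2}=t^2-\frac32 t$, with the extremal configurations being exactly the edge-maximal near-$(t-1)$-regular factor-critical blocks realizing equality throughout. The delicate point, and the crux of the whole argument, is that $\Lambda\ge\frac{t}{2}$ hinges on the integrality coming from the floor: the naive real-valued estimate only yields $\Lambda\ge\frac{t-1}{2}$, which is off by the decisive $\tfrac12$.
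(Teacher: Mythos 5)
The paper does not actually prove this statement: it is quoted from Abbott--Hanson--Sauer [1] and used as a black box, so there is no in-paper argument to compare yours against. Judged on its own, your Gallai--Edmonds route is sound for the numerical bounds (which is all the paper ever uses). The reduction to $\nu(G)\le t-1$ and $\Delta(G)\le t-1$ is correct, the master inequality $e(G)\le t\,\nu(G)-\Lambda$ follows from the decomposition exactly as you set it up, the per-block estimate $e(D_i)\le t\mu_i$ checks out in both regimes, and the parity refinement $\mathrm{def}_i\ge\frac{n_i-t+1}{2}$ for a large block (using that $n_i(t-1)$ is odd when $t$ is even and $n_i$ is odd) together with ``at most one large block, since two would need $\mu_1+\mu_2\ge t>t-1$'' does give $\Lambda\ge\frac{t}{2}$ and hence $e(G)\le t^2-\frac{3}{2}t$. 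You have correctly located the decisive half-integer gain in the floor. The odd-$t$ equality analysis ($A=C=\emptyset$, each nontrivial block $(t-1)$-regular on $t$ vertices, hence $K_t$, and exactly two of them) is also complete.

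The gap is in the even-$t$ extremal characterization, and it is not just a detail to be filled in: your own accounting shows the sketch cannot terminate in the family stated in the theorem. Equality $\Lambda=\frac{t}{2}$ in the one-large-block case forces $\mathrm{def}_1=\frac{n_1-t+1}{2}$ for the large block but only $\mathrm{def}_i=\mu_i$ for each small block, and the latter is attained with $D_i=K_{t-1}$ (factor-critical of odd order, since $t$ is even). Concretely, for $t=4$ take $K_3\cup D$ where $D$ is $K_5$ minus three edges covering all five vertices: this graph has $10=t^2-\frac{3}{2}t$ edges, maximum degree $3$ and matching number $3$, so it is $\{M_4,S_4\}$-free and extremal, yet it has $8$ vertices rather than $2t-1=7$. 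So the extremal family is strictly larger than ``graphs on $2t-1$ vertices with $t^2-\frac{3}{2}t$ edges and maximum degree $t-1$'' as reproduced in the theorem, and your closing claim that the extremal configurations are ``exactly the edge-maximal near-regular factor-critical blocks'' neither completes the equality analysis nor matches the statement you set out to prove. You should either restrict your claim to the edge bounds (which you do prove) or carry out the full equality case, which will reveal the additional $K_{t-1}$-component configurations.
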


   We also need the stronger version proved by Chv\'{a}tal and Hanson \cite{1976-Chvatal-degrees}.
   \begin{theorem}[\cite{1976-Chvatal-degrees}]
      For every $k\geq 1$ and $t\geq 1$,
 \begin{equation*}
 ex(n,\{M_{k+1},S_{t+1}\})=k t+\Big\lfloor\frac{t}{2}\Big\rfloor\cdot\Big\lfloor\frac{k}{\lceil t/2\rceil}\Big\rfloor\leq kt+k.
 \end{equation*}
   \end{theorem}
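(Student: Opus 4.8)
The plan is to establish matching upper and lower bounds; the lower bound comes from an explicit construction, and the upper bound is the genuine difficulty. For the construction, set $s=\lceil t/2\rceil$ and $d=\lfloor t/2\rfloor$ (so that $t=s+d$), and write $k=qs+r$ with $q=\lfloor k/\lceil t/2\rceil\rfloor$ and $0\le r<s$. I would take the disjoint union of $q$ copies of a ``full block'' $B$ together with $r$ disjoint stars $K_{1,t}$. Here $B$ is a graph with $\Delta(B)\le t$, $\nu(B)=s$ and $e(B)=ts+d$: when $t$ is even one may take $B=K_{t+1}$, and when $t$ is odd one takes the graph on $t+2$ vertices with all degrees equal to $t$ except a single vertex of degree $t-1$ (that is, $K_{t+2}$ with a suitable set of $\tfrac{t+3}{2}$ edges deleted), which still has $\nu=\lceil t/2\rceil$. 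A direct check gives maximum degree $t$, matching number $qs+r=k$, and
\begin{equation*}
e=q(ts+d)+rt=t(qs+r)+qd=kt+\Big\lfloor\frac t2\Big\rfloor\Big\lfloor\frac{k}{\lceil t/2\rceil}\Big\rfloor,
\end{equation*}
and this graph fits inside $K_n$ once $n$ is at least the $O(k)$ vertices it uses. This settles the lower bound.

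For the upper bound, let $G$ be $\{M_{k+1},S_{t+1}\}$-free, fix a maximum matching $M$ with $m=|M|=\nu(G)\le k$, and write $W=V(M)$, $U=V(G)\setminus W$. Since $M$ is maximum, $U$ is independent, so every edge lies inside $W$ or joins $W$ to $U$, and $e(G)=e(W)+e(W,U)$. The basic tool is the augmenting-path obstruction: between two distinct exposed vertices there is no $M$-alternating path. Applied to a single matching edge $x_iy_i$, this forbids $u\sim x_i$ and $u'\sim y_i$ for distinct $u,u'\in U$, so for each matching edge at most one endpoint sends edges to $U$ (unless both see a single common vertex of $U$); applied along several matching edges, it rules out the longer alternating paths and thereby constrains the edges running between different matching pairs inside $W$. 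Combining these constraints with $\Delta(G)\le t$ bounds $e(W,U)$ and, more delicately, $e(W)$.

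The heart of the argument is to turn this local structure into the exact constant. I would argue that an extremal $G$ may be assumed, after discarding isolated vertices and performing edge-swaps that preserve the edge count and the constraints $\nu\le k$ and $\Delta\le t$, to be a disjoint union of ``saturated'' pieces, each of which the augmenting-path analysis forces to be a clique $K_{t+1}$, the odd-$t$ near-clique $B$ above, or a star; exposed vertices turn out to be wasteful, since their incident edges consume degree at $W$ without contributing to $\nu$. The problem then reduces to the packing optimisation of maximising $\sum_i e(B_i)$ over disjoint pieces subject to $\sum_i\nu(B_i)\le k$ and $\Delta(B_i)\le t$. Among the admissible pieces the largest ratio $e(B_i)/\nu(B_i)$ is attained by the full block $B$, and its matching cost comes in indivisible chunks of $\lceil t/2\rceil$, so the optimum is $q$ full blocks plus $r$ stars, giving exactly $kt+\lfloor t/2\rfloor\lfloor k/\lceil t/2\rceil\rfloor$; the closing inequality $\lfloor t/2\rfloor\lfloor k/\lceil t/2\rceil\rfloor\le\lfloor t/2\rfloor\cdot\frac{k}{\lceil t/2\rceil}\le k$ then yields the stated bound $kt+k$. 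I expect the main obstacle to be precisely this reduction to disjoint blocks: ruling out cleverly ``mixed'' components that share degree among several near-cliques, and pinning down the remainder so that the floor and ceiling functions come out correctly, is where the parity bookkeeping is delicate and where the full Chv\'atal--Hanson argument does its real work.
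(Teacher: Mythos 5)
The paper does not prove this statement at all: it is quoted verbatim from Chv\'atal and Hanson \cite{1976-Chvatal-degrees} as a known result, so there is no in-paper argument to compare yours against. Judged on its own terms, your lower bound is correct and complete: the block $B$ (namely $K_{t+1}$ for even $t$, and for odd $t$ the graph on $t+2$ vertices obtained from $K_{t+2}$ by deleting $\tfrac{t+3}{2}$ edges so that one vertex has degree $t-1$ and the rest degree $t$) does have $\Delta=t$, $\nu=\lceil t/2\rceil$ and $t\lceil t/2\rceil+\lfloor t/2\rfloor$ edges, and the edge count $q(ts+d)+rt=kt+\lfloor t/2\rfloor\lfloor k/\lceil t/2\rceil\rfloor$ checks out.

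The upper bound, however, is not a proof but a statement of intent, and the one step that carries all the content is exactly the step you defer. You assert that an extremal $G$ ``may be assumed, after edge-swaps, to be a disjoint union of saturated pieces'' each of which is a clique, the near-clique $B$, or a star, and then solve the resulting packing problem; but no such swap argument is given, and in its literal form the structural claim is false --- the paper's own Theorem on $ex(n,\{M_t,S_t\})$ records that for one parity the extremal graphs are \emph{all} graphs on $2t-1$ vertices with $t^2-\tfrac32 t$ edges and maximum degree $t-1$, most of which are connected and are not disjoint unions of cliques and stars. So the reduction cannot proceed by classifying extremal graphs; one must instead bound $e(W)+e(W,U)$ directly from the alternating-path obstructions (or via Gallai--Edmonds decomposition), which is precisely the part you acknowledge as ``where the full Chv\'atal--Hanson argument does its real work.'' As it stands the argument establishes only the easy inequality $ex(n,\{M_{k+1},S_{t+1}\})\geq kt+\lfloor t/2\rfloor\lfloor k/\lceil t/2\rceil\rfloor$, and the matching upper bound remains unproved.
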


   \section{Proofs of Theorems \ref{theorem_1},  \ref{theorem_2} and \ref{theorem_4}}

   \noindent
   {\bf Proof of Theorems \ref{theorem_1}.}
   For any graph $T$ in $EX(\frac{k}{2}-1,\mathcal{H})$, obviously $T\vee I_{n-k/2+1}$ is a $\{P_k,H\}$-free connected graph. Then
   \begin{eqnarray}\label{eq1}
   ex_{conn}(n,\{P_k,H\})\geq ex\Big(\frac{k}{2}-1,\mathcal{H}\Big)+\Big(\frac{k}{2}-1\Big)\Big(n-\frac{k}{2}+1\Big).
   \end{eqnarray}

   Next we show that $ex_{conn}(n,\{P_k,H\})\leq ex(\frac{k}{2}-1,\mathcal{H})+(\frac{k}{2}-1)(n-\frac{k}{2}+1)$. Let $G \in EX_{conn}(n, \{P_k,H\})$.

   If $P_{k-1}\nsubseteq G$, by Theorem \ref{th_1},
   \begin{eqnarray*}
   e(G)\leq \frac{k-3}{2}n < ex\Big(\frac{k}{2}-1,\mathcal{H}\Big)+\Big(\frac{k}{2}-1\Big)\Big(n-\frac{k}{2}+1\Big)
   \end{eqnarray*}
   for sufficiently large $n$.

   If $P_{k-1}\subseteq G$, we claim that $|A_{k/2-1}|=\Theta(n)$. Otherwise, $|A_{k/2-1}|=o(n)$.  By Lemma \ref{lemma_3},
   \begin{align}
      e(G)-e(G[V(P_{k-1})])&\leq \Big(\frac{k}{2}-1\Big)\Big|A_{k/2-1}\Big|+\sum_{l=0}^{k/2-2}\Big(\frac{k}{2}-\frac{3}{2}\Big)|A_l|\nonumber\\[2mm]
      &=\Big(\frac{k}{2}-1\Big)\Big|A_{k/2-1}\Big|+\Big(\frac{k}{2}-\frac{3}{2}\Big)\Big(n-k+1-
      \Big|A_{k/2-1}\Big|\Big)\nonumber\\[2mm]
      &=\Big(\frac{k}{2}-\frac{3}{2}\Big)n+o(n).\nonumber
   \end{align}
  Combining with
  $e(G[V(P_{k-1})])\leq ex(k-1,H)$, we have
   \begin{eqnarray*}
    e(G)&\leq& ex(k-1,H)+\Big(\frac{k}{2}-\frac{3}{2}\Big)n+o(n)\\[2mm]
    &<&ex\Big(\frac{k}{2}-1,\mathcal{H}\Big)+\Big(\frac{k}{2}-1\Big)\Big(n-\frac{k}{2}+1\Big)
     \end{eqnarray*}
     for sufficiently large $n$, which is a contradiction to (\ref{eq1}). So $|A_{k/2-1}|=\Theta(n)$.
     By    Lemma \ref{lemma_4},
  \begin{equation*}
  e(G[V(P_{k-1})])\leq ex\Big(\frac{k}{2}-1,\mathcal{H}\Big)+\Big(\frac{k}{2}-1\Big)\frac{k}{2}.
  \end{equation*}
        According to    Lemma \ref{lemma_3},
   \begin{eqnarray*}
   e(G)-e(G[V(P_{k-1})])&\leq &\Big(\frac{k}{2}-1\Big)\Big|A_{k/2-1}\Big|+\sum_{l=0}^{k/2-2}\Big(\frac{k}{2}-\frac{3}{2}\Big)|A_l|\\[2mm]
   &\leq &\Big(\frac{k}{2}-1\Big)(n-k+1).
   \end{eqnarray*}
     Then
   \begin{eqnarray*}
   e(G)&\leq &ex\Big(\frac{k}{2}-1,\mathcal{H}\Big)+\Big(\frac{k}{2}-1\Big)\frac{k}{2}+\Big(\frac{k}{2}-1\Big)(n-k+1)\\[2mm]
   &= &ex\Big(\frac{k}{2}-1,\mathcal{H}\Big)+\Big(\frac{k}{2}-1\Big)\Big(n-\frac{k}{2}+1\Big).
\end{eqnarray*}
So $ex_{conn}\big(n,\{P_k,H\}\big)=ex\big(\frac{k}{2}-1,\mathcal{H}\big)+\big(\frac{k}{2}-1\big)\big(n-\frac{k}{2}+1\big)$.
The extremal graph is $T\vee I_{n-k/2+1}$, where $T\in EX(\frac{k}{2}-1,\mathcal{H})$.
 \qed

\noindent
{\bf Proof of Theorem \ref{theorem_2}.}
   For any graph $T$ in $EX(\frac{k-3}{2},\mathcal{H})$, obviously $T\vee I_{n-(k-3)/2}$ is a $\{P_k,H\}$-free connected graph. Then
   \begin{eqnarray}\label{eq2}
   ex_{conn}(n,\{P_k,H\})\geq ex\Big(\frac{k-3}{2},\mathcal{H}\Big)+\frac{k-3}{2}\Big(n-\frac{k-3}{2}\Big).
   \end{eqnarray}

   Next we will show that $ex_{conn}\big(n,\{P_k,H\}\big)\leq ex\big(\frac{k-3}{2},\mathcal{H}\big)+\frac{k-3}{2}(n-\frac{k-3}{2})+1.$ Let $G \in EX_{conn}\big(n, \{P_k,H\}\big)$.
   If $P_{k-2}\nsubseteq G$, by Theorem \ref{th_1}, $e(G)\leq \frac{k-4}{2}n<ex\big(\frac{k-3}{2},\mathcal{H}\big)+\frac{k-3}{2}\big(n-\frac{k-3}{2}\big)$ for sufficiently large $n$.

   If $P_{k-1}\nsubseteq G$, $P_{k-2}\subseteq G$, we claim that   $\big|A_{(k-3)/2}\big|=\Theta(n)$. Otherwise, $\big|A_{(k-3)/2}\big|=o(n)$.
    By Lemma \ref{lemma_3},
   \begin{align}
      e(G)-e(G[V(P_{k-2})])&\leq \frac{k-3}{2}\big|A_{(k-3)/2}\big|+\sum_{l=0}^{(k-5)/2}\frac{k-4}{2}|A_l|\nonumber\\[2mm]
      &=\frac{k-3}{2}\big|A_{(k-3)/2}\big|+\frac{k-4}{2}\big(n-k+2-\big|A_{(k-3)/2}\big|\big)\nonumber\\[2mm]
      &=\frac{k-4}{2}n+o(n).\nonumber
   \end{align}
  Combining with $e(G[V(P_{k-2})])\leq ex(k-2,H)$, we have
  \begin{eqnarray*}
  e(G)&\leq& ex(k-2,H)+\frac{k-4}{2}n+o(n)\\[2mm]
  &<& ex\Big(\frac{k-3}{2},\mathcal{H}\Big)+\frac{k-3}{2}\Big(n-\frac{k-3}{2}\Big)
  \end{eqnarray*}
  for sufficiently large $n$, which is a contradiction to (\ref{eq2}). So $\big|A_{(k-3)/2}\big|=\Theta(n)$.

   By Lemmas  \ref{lemma_4} and \ref{lemma_3}, we have
   \begin{eqnarray*}
   e(G[V(P_{k-2})])\leq ex\Big(\frac{k-3}{2},\mathcal{H}\Big)+\frac{k-3}{2}\frac{k-1}{2},
   \end{eqnarray*}
   and
   \begin{eqnarray*}
   e(G)-e(G[V(P_{k-2})])&\leq &\frac{k-3}{2}\big|A_{(k-3)/2}\big|+\sum_{l=0}^{(k-5)/2}\frac{k-4}{2}|A_l|\\[2mm]
  & \leq &\frac{k-3}{2}(n-k+2).
 \end{eqnarray*}
 Then
 \begin{eqnarray*}
 e(G)&\leq &ex\Big(\frac{k-3}{2},\mathcal{H}\Big)+\frac{k-3}{2}\frac{k-1}{2}+\frac{k-3}{2}(n-k+2)\\[2mm]
 &=& ex\Big(\frac{k-3}{2},\mathcal{H}\Big)+\frac{k-3}{2}\Big(n-\frac{k-3}{2}\Big).
 \end{eqnarray*}

   If $P_{k-1}\subseteq G$, we claim that   $|A_{(k-3)/2}|=\Theta(n)$. Otherwise, $|A_{(k-3)/2}|=o(n)$. By Lemma \ref{lemma_3},
   \begin{align}
      e(G)-e(G[V(P_{k-1})])&\leq \frac{k-3}{2}\big|A_{(k-3)/2}\big|+\sum_{l=0}^{(k-5)/2}\frac{k-4}{2}|A_l|\nonumber\\[2mm]
      &=\frac{k-3}{2}\big|A_{(k-3)/2}\big|+\frac{k-4}{2}\big(n-k+1-\big|A_{(k-3)/2}\big|\big)\nonumber\\[2mm]
      &=\frac{k-4}{2}n+o(n).\nonumber
   \end{align}
  Combining with  $e(G[V(P_{k-1})])\leq ex(k-1,H)$, we have
   \begin{eqnarray*}
  e(G)&\leq& ex(k-1,H)+\frac{k-4}{2}n+o(n)\\[2mm]
  &<& ex\Big(\frac{k-3}{2},\mathcal{H}\Big)+\frac{k-3}{2}\Big(n-\frac{k-3}{2}\Big),
   \end{eqnarray*}
   for sufficiently large $n$, which is a contradiction to (\ref{eq2}). So $\big|A_{(k-3)/2}\big|=\Theta(n)$.
   By Lemmas \ref{lemma_4} and \ref{lemma_3}, we have
    \begin{eqnarray*}
   e\big(G[V(P_{k-1})]\big)\leq ex\Big(\frac{k-3}{2},\mathcal{H}\Big)+\frac{k-3}{2}\frac{k+1}{2}+1,
    \end{eqnarray*}
   and
    \begin{eqnarray*}
   e(G)-e(G[V(P_{k-1})])&\leq & \frac{k-3}{2}\big|A_{(k-3)/2}\big|+\sum_{l=0}^{(k-5)/2}\frac{k-4}{2}|A_l|\\
   &\leq& \frac{k-3}{2}(n-k+1).
   \end{eqnarray*}
   Then
   \begin{eqnarray*}
   e(G)&\leq &ex\Big(\frac{k-3}{2},\mathcal{H}\Big)+\frac{k-3}{2}\frac{k+1}{2}+1+\frac{k-3}{2}(n-k+1)\\[2mm]
   &=& ex\Big(\frac{k-3}{2},\mathcal{H}\Big)+\frac{k-3}{2}\Big(n-\frac{k-3}{2}\Big)+1.
\end{eqnarray*}
Consequently, $ex_{conn}\big(n,\{P_k,H\}\big)=ex\big(\frac{k-3}{2},\mathcal{H}\big)+\frac{k-3}{2}\big(n-\frac{k-3}{2}\big)+c$, where $c=0$ or $1$.
\qed

\noindent
{\bf Proof of Theorem \ref{theorem_4}.} Let $G \in EX_{conn}(n, \{P_k,H\})$. According to the proof of Theorem \ref{theorem_2}, we only need to consider the case $P_{k-1}\subseteq G$. As in the proof of Theorem \ref{theorem_2},  we can show that $\big|A_{(k-3)/2}\big|=\Theta(n)$.
   By Lemma \ref{lemma_6}, $$e(G[V(P_{k-1})])\leq ex\Big(\frac{k-3}{2},\mathcal{H}\Big)+\frac{k-3}{2}\frac{k+1}{2}.$$ Then
   \begin{eqnarray*}
   e(G)&\leq & ex\Big(\frac{k-3}{2},\mathcal{H}\Big)+\frac{k-3}{2}\frac{k+1}{2}+\frac{k-3}{2}(n-k+1)\\[2mm]
   &= &ex\Big(\frac{k-3}{2},\mathcal{H}\Big)+\frac{k-3}{2}\Big(n-\frac{k-3}{2}\Big).
   \end{eqnarray*}
   So
   \begin{eqnarray*}
   ex_{conn}(n,\{P_k,H\})\leq ex\Big(\frac{k-3}{2},\mathcal{H}\Big)+\frac{k-3}{2}\Big(n-\frac{k-3}{2}\Big).
    \end{eqnarray*}
    Combining with the lower bound
    \begin{eqnarray*}
    ex_{conn}(n,\{P_k,H\})\geq ex\Big(\frac{k-3}{2},\mathcal{H}\Big)+\frac{k-3}{2}\Big(n-\frac{k-3}{2}\Big),
    \end{eqnarray*}
     we have $$ex_{conn}(n,\{P_k,H\})= ex\Big(\frac{k-3}{2},\mathcal{H}\Big)+\frac{k-3}{2}\Big(n-\frac{k-3}{2}\Big).$$
    Moreover, $T\vee I_{n-(k-3)/2}$ is an extremal graph, where $T$ is a graph in $EX\Big(\frac{k-3}{2},\mathcal{H}\Big)$. \qed

   We show that the above results imply Theorem \ref{th_3}. When $k$ is even, Theorem \ref{theorem_1} yields that
    \begin{eqnarray*}
   ex_{conn}(n,\{P_k,K_m\})&=& ex\Big(\frac{k}{2}-1,K_{m-1}\Big)+\Big(\frac{k}{2}-1\Big)\Big(n-\frac{k}{2}+1\Big)\\[2mm]
   &=& e\Big(T\Big(\frac{k}{2}-1,m-2\Big)\Big)+\Big(\frac{k}{2}-1\Big)\Big(n-\frac{k}{2}+1\Big),
    \end{eqnarray*}
   and the extremal graph is $T\big(\frac{k}{2}-1,m-2\big)\vee I_{n-k/2+1}$. When $k$ is odd, we have the following corollary.

\begin{corollary}
    For $k$ is odd, if $k\geq 2m-1$, then $ex_{conn}(n,\{K_m,P_k\})=e\big(T(\frac{k-3}{2},m-2)\big)+\frac{k-3}{2}\Big(n-\frac{k-3}{2}\Big)$ for  sufficiently large $n$, and the extremal graph is $T(\frac{k-3}{2},m-2)\vee I_{n-(k-3)/2}$. If $m<k<2m-1$, $ex_{conn}(n,\{K_m,P_k\})=\frac{(k-3)(k-5)}{8}+\frac{k-3}{2}\Big(n-\frac{k-3}{2}\Big)+1$ and the extremal graph is $K_{(k-3)/2}\vee \big(I_{n-(k-1)/2}\cup K_2\big)$.
\end{corollary}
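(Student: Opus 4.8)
The plan is to specialize the general machinery of Theorems \ref{theorem_2} and \ref{theorem_4} to $H=K_m$ (so $\chi(K_m)=m>2$), where both auxiliary families become explicit. Since every color class of $K_m$ is a single vertex, deleting a color class gives $\mathcal{H}=\{K_{m-1}\}$; since any two vertices of $K_m$ are adjacent, deleting two adjacent vertices gives $\mathcal{H}'=\{K_{m-2}\}$. By Turán's theorem the unique graph in $EX(\frac{k-3}{2},\mathcal{H})$ is $T(\frac{k-3}{2},m-2)$ and $ex(\frac{k-3}{2},\mathcal{H})=e(T(\frac{k-3}{2},m-2))$. The whole argument then splits according to whether this Turán graph has all $m-2$ parts nonempty, i.e. according to the sign of $\frac{k-3}{2}-(m-2)$, which is exactly the dichotomy $k\ge 2m-1$ versus $m<k<2m-1$.

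First I would treat $k\ge 2m-1$. Here $\frac{k-3}{2}\ge m-2$, so $T(\frac{k-3}{2},m-2)$ is a complete $(m-2)$-partite graph with every part nonempty, and therefore contains $K_{m-2}$, the unique member of $\mathcal{H}'$. Thus the hypothesis of Theorem \ref{theorem_4} is satisfied, and that theorem immediately yields
\[
ex_{conn}(n,\{K_m,P_k\})=e\Big(T\Big(\tfrac{k-3}{2},m-2\Big)\Big)+\tfrac{k-3}{2}\Big(n-\tfrac{k-3}{2}\Big),
\]
with extremal graph $T(\frac{k-3}{2},m-2)\vee I_{n-(k-3)/2}$, as claimed.

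The case $m<k<2m-1$ is where the additive $+1$ appears and is the heart of the corollary. Now $\frac{k-3}{2}<m-2$, so $T(\frac{k-3}{2},m-2)=K_{(k-3)/2}$ and $ex(\frac{k-3}{2},K_{m-1})=\binom{(k-3)/2}{2}=\frac{(k-3)(k-5)}{8}$. Theorem \ref{theorem_2} then gives the upper bound $\frac{(k-3)(k-5)}{8}+\frac{k-3}{2}(n-\frac{k-3}{2})+c$ with $c\in\{0,1\}$, and it remains only to produce a connected $\{K_m,P_k\}$-free graph attaining the value with $c=1$. I would take $G^{*}=K_{(k-3)/2}\vee(K_2\cup I_{n-(k+1)/2})$, which for odd $k$ is precisely the second extremal graph of Theorem \ref{th_2} (namely $(K_{k-2\lfloor k/2\rfloor+1}\cup I_{n-\lceil k/2\rceil})\vee K_{\lfloor k/2\rfloor-1}$). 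Hence $G^{*}$ is connected and $P_k$-free, and its edge count equals $\binom{\lceil k/2\rceil}{2}+\lfloor\frac{k-2}{2}\rfloor(n-\lceil k/2\rceil)$, which a short computation shows equals $\frac{(k-3)(k-5)}{8}+\frac{k-3}{2}(n-\frac{k-3}{2})+1$. The remaining point is $K_m$-freeness: the largest clique of $G^{*}$ consists of all of $K_{(k-3)/2}$ together with the two vertices of $K_2$ (no vertex of the independent set can extend such a clique, since those vertices are nonadjacent to the $K_2$ and to one another), so $\omega(G^{*})=\frac{k+1}{2}$, and the hypothesis $k<2m-1$ forces $\frac{k+1}{2}<m$. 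Thus $G^{*}$ is $K_m$-free, the lower bound matches the upper bound, and $c=1$.

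The only genuinely delicate steps are checking that the proposed $G^{*}$ is simultaneously $P_k$-free and $K_m$-free and that its size is exactly one more than the $c=0$ value; both follow cleanly by identifying $G^{*}$ with the known extremal configuration of Theorem \ref{th_2} and comparing its clique number $\frac{k+1}{2}$ against the threshold imposed by $k<2m-1$. I expect no further obstruction, since Theorems \ref{theorem_2} and \ref{theorem_4} already carry out all the structural work.
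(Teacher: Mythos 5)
Your proof follows the paper's argument exactly: specialize to $\mathcal{H}=\{K_{m-1}\}$ and $\mathcal{H}'=\{K_{m-2}\}$, apply Theorem \ref{theorem_4} when $k\ge 2m-1$ (the Tur\'an graph then contains $K_{m-2}$), and for $m<k<2m-1$ combine the upper bound of Theorem \ref{theorem_2} with the construction $K_{(k-3)/2}\vee\big(K_2\cup I_{n-(k+1)/2}\big)$ to force $c=1$. You supply more detail than the paper on why this construction is $\{P_k,K_m\}$-free (identifying it with the extremal graph of Theorem \ref{th_2} and checking its clique number $\frac{k+1}{2}<m$), and your count $n-\frac{k+1}{2}$ for the independent part is the correct one (the paper's $n-\frac{k-1}{2}$ appears to be an off-by-one slip).
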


\begin{proof}
    Take $H=K_m$, then $\mathcal{H}=\{K_{m-1}\}$, $EX\big(\frac{k-3}{2},K_{m-1}\big)=\big\{T\big(\frac{k-3}{2},m-2\big)\big\}$ and $\mathcal{H}'=\{K_{m-2}\}$. If $k\geq 2m-1$, since $T(\frac{k-3}{2},m-2)$ contains a $K_{m-2}$, by Theorem \ref{theorem_4}, $ex_{conn}(n,\{P_k,K_m\})=e\big(T(\frac{k-3}{2},m-2)\big)+\frac{k-3}{2}\big(n-\frac{k-3}{2}\big)$ and the extremal graph is $T(\frac{k-3}{2},m-2)\vee I_{n-(k-3)/2}$.

    If $m<k<2m-1$, it is easily  seen that  $K_{(k-3)/2}\vee (I_{n-(k-1)/2}\cup K_2)$ is $\{P_k,K_m\}$-free, by Theorem \ref{theorem_2}, we have
   \begin{eqnarray*}
   ex_{conn}(n,\{K_m,P_k\})&=&e\Big(T\Big(\frac{k-3}{2},m-2\Big)\Big)+\frac{k-3}{2}\Big(n-\frac{k-3}{2}\Big)+1\\[2mm]
   &=&\frac{(k-3)(k-5)}{8}+\frac{k-3}{2}\Big(n-\frac{k-3}{2}\Big)+1.
   \end{eqnarray*} \end{proof}

 Take $H=F_t$,  where $F_t$ is the graph formed by $t$ triangles intersecting at a vertex. Then $\mathcal{H}=\{M_t,S_t\}$ and $\mathcal{H}'=\{F_{t-1},M_{t-1}\cup K_1\}$. By Theorem \ref{lemma_7}, when $k$ is odd, the extremal graph for  $\{M_t,S_t\}$ is
 $K_t \cup K_t$. When $k$ is even, the extremal graphs for  $\{M_t,S_t\}$
 are all the graphs with $2t-1$ vertices, $t^2-\frac{3}{2}t$ edges and maximum degree $t-1$.
\begin{corollary}
   For sufficiently large $n$, $ex_{conn}(n,\{F_t,P_k\})=ex\big(\lfloor\frac{k}{2}\rfloor-1,\{M_t,S_t\}\big)+\big(\lfloor\frac{k}{2}\rfloor-1\big)
   \big(n-\lfloor\frac{k}{2}\rfloor+1\big)$ and the extremal graph is $T\vee I_{n-\lfloor k/2\rfloor+1}$, where $T\in EX\big(\lfloor\frac{k}{2}\rfloor-1,\{M_t,S_t\}\big)$.
\end{corollary}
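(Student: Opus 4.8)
The plan is to deduce the corollary directly from Theorems \ref{theorem_1} and \ref{theorem_4}, after first identifying the auxiliary families $\mathcal{H}$ and $\mathcal{H}'$ for $H=F_t$ and then, in the odd case, verifying the structural hypothesis required by Theorem \ref{theorem_4}. First I would pin down $\mathcal{H}$ and $\mathcal{H}'$ explicitly. Write $F_t$ as a center $z$ together with triangles $\{z,a_i,b_i\}$, $1\le i\le t$. In a proper $3$-coloring the vertex $z$ forms its own color class, while the other two classes each contain one endpoint of every triangle. Deleting $\{z\}$ leaves the $t$ disjoint edges $a_ib_i$, i.e.\ $M_t$; deleting either of the remaining classes leaves the star centered at $z$, i.e.\ $S_t$. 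Hence $\mathcal{H}=\{M_t,S_t\}$. For $\mathcal{H}'$ I would delete two adjacent vertices: removing $z$ together with some $a_i$ destroys every triangle and isolates $b_i$, giving $M_{t-1}\cup K_1$; removing the pair $\{a_i,b_i\}$ of a single triangle leaves $F_{t-1}$. Thus $\mathcal{H}'=\{F_{t-1},M_{t-1}\cup K_1\}$, matching the families named before the corollary.

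For even $k$ the result is immediate. Applying Theorem \ref{theorem_1} with $H=F_t$ gives $ex_{conn}(n,\{P_k,F_t\})=ex(\tfrac{k}{2}-1,\{M_t,S_t\})+(\tfrac{k}{2}-1)(n-\tfrac{k}{2}+1)$, and since $\lfloor k/2\rfloor-1=\tfrac{k}{2}-1$ this is exactly the stated formula, with extremal graph $T\vee I_{n-k/2+1}$ for $T\in EX(\tfrac{k}{2}-1,\{M_t,S_t\})$.

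For odd $k$ the substantive step is to invoke Theorem \ref{theorem_4}, whose hypothesis demands that every graph in $EX(\tfrac{k-3}{2},\{M_t,S_t\})$ contain a member of $\mathcal{H}'=\{F_{t-1},M_{t-1}\cup K_1\}$. I would verify this from the structure supplied by Theorem \ref{lemma_7}: after discarding isolated vertices, an extremal $\{M_t,S_t\}$-free graph is $K_t\cup K_t$ when $t$ is odd, and is a graph on $2t-1$ vertices with maximum degree $t-1$ when $t$ is even. In both cases the matching number is exactly $t-1$, so the graph contains $M_{t-1}$; and because $M_{t-1}$ uses only $2(t-1)$ vertices while the host has at least $2t-1$ vertices (plus any isolated vertices coming from padding up to $\tfrac{k-3}{2}$), a spare vertex remains to realize the $K_1$, producing a copy of $M_{t-1}\cup K_1$. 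With the hypothesis confirmed, Theorem \ref{theorem_4} yields $ex_{conn}(n,\{P_k,F_t\})=ex(\tfrac{k-3}{2},\{M_t,S_t\})+\tfrac{k-3}{2}(n-\tfrac{k-3}{2})$ with extremal graph $T\vee I_{n-(k-3)/2}$, and since $\lfloor k/2\rfloor-1=\tfrac{k-3}{2}$ this again matches the claimed formula and gives $c=0$.

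The main obstacle is precisely this hypothesis check in the odd case. One must be sure that the matching number of every extremal $\{M_t,S_t\}$-free graph really equals $t-1$ (so $M_{t-1}$ is present) and that a vertex disjoint from such a maximum matching is always available to supply the $K_1$. The delicate regime is where the number $\tfrac{k-3}{2}$ of core vertices is only just large enough to accommodate the extremal block of Theorem \ref{lemma_7}; there I would record the explicit threshold on $k$ (roughly $\tfrac{k-3}{2}\ge 2t$ for odd $t$ and $\tfrac{k-3}{2}\ge 2t-1$ for even $t$) beyond which the spare vertex is guaranteed. For $k$ below this threshold the hypothesis of Theorem \ref{theorem_4} can fail, and one would fall back on Theorem \ref{theorem_2}; unlike the clique case, the friendship graph $F_t$ does not admit the competing $c=1$ configuration (a near-universal core joined to an extra $K_2$ forces a copy of $F_t$), so I expect $c=0$ to persist, but this boundary case warrants a separate short verification rather than a direct appeal to Theorem \ref{theorem_4}.
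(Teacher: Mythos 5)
Your proposal follows essentially the same route as the paper: identify $\mathcal{H}=\{M_t,S_t\}$ and $\mathcal{H}'=\{F_{t-1},M_{t-1}\cup K_1\}$, apply Theorem \ref{theorem_1} for even $k$, and for odd $k$ verify the hypothesis of Theorem \ref{theorem_4} that every graph in $EX\big(\frac{k-3}{2},\{M_t,S_t\}\big)$ contains $M_{t-1}\cup K_1$. The only difference is that the paper certifies the presence of $M_{t-1}$ by comparing $ex\big(\frac{k-3}{2},\{M_t,S_t\}\big)$ with the Chv\'atal--Hanson upper bound on $ex\big(\frac{k-3}{2},\{M_{t-1},S_t\}\big)$ rather than by reading the matching number off the extremal structures (which, as you yourself note, still needs justification in the even-$t$ case), and your caveat that $\frac{k-3}{2}$ must be large enough relative to $t$ for the spare vertex (and for Theorem \ref{lemma_7} to apply at all) is a legitimate point that the paper leaves implicit.
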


\begin{proof} Take $H=F_t$.
    When $k$ is even, Theorem \ref{theorem_1} yields
   \begin{eqnarray*}
ex_{conn}(n,\{P_k,F_t\})=ex\Big(\frac{k}{2}-1,\{M_t,S_t\}\Big)+\Big(\frac{k}{2}-1\Big)\Big(n-\frac{k}{2}+1\Big),
    \end{eqnarray*}
     and the extremal graph is $T\vee I_{n-k/2+1}$, where $T\in EX(\frac{k}{2}-1,\{M_t,S_t\})$.

     If $k$ is odd, then $\mathcal{H}=\{M_t,S_t\}$ and $\mathcal{H}'=\{F_{t-1},M_{t-1}\cup K_1\}$. We first show that every graph in $EX(\frac{k-3}{2},\{M_t,S_t\})$ contains a copy of $M_{t-1}\cup K_1$.
     By Theorem \ref{lemma_6}, $ex\big(\frac{k-3}{2},\{M_t,S_t\}\big)=t^2-t$ if $t$ is odd and $ex\big(\frac{k-3}{2},\{M_t,S_t\}\big)=t^2-\frac{3}{2}t$ otherwise. By Theorem \ref{lemma_7}, $ex\big(\frac{k-3}{2},\{M_{t-1},S_t\}\big)\leq (t-1)^2$. Then $ex\big(\frac{k-3}{2},\{M_t,S_t\}\big)>ex\big(\frac{k-3}{2},\{M_{t-1},S_t\}\big)$. So  every graph in $EX(\frac{k-3}{2},\{M_t,S_t\})$ must contain  a copy of  $M_{t-1}\cup K_1$. Hence, by Theorem \ref{theorem_4},
    $$ex_{conn}(n,\{P_k,F_t\})=ex\Big(\frac{k-3}{2},\{M_t,S_t\}\Big)+\frac{k-3}{2}\Big(n-\frac{k-3}{2}\Big),$$ and the extremal graph is $T\vee I_{n-(k-3)/2}$, where $T\in EX\big(\frac{k-3}{2},\{M_t,S_t\}\big)$.
\end{proof}
{\bf Remark.}
   We  give an example for  $c=1$ in Theorem \ref{theorem_2}.
   Take $H=K_{2,2,2}$. We will show that $ex_{conn}(n,\{P_k,H\})=ex\big(\frac{k-3}{2},\mathcal{H}\big)+\frac{k-3}{2}\big(n-\frac{k-3}{2}\big)+1$.

   For $H=K_{2,2,2}$, then $\mathcal{H}=C_4$. It is sufficient to prove that for every $T\in EX(\frac{k-3}{2},C_4)$, $G=T\vee \big(I_{n-(k-1)/2}\cup \{uv\}\big)$ is $\{P_k,K_{2,2,2}\}$-free. Obviously $G$ is $P_k$-free.
     If $G$ contains a copy  of $K_{2,2,2}$, say $W$, then $W$ must contain $uv$. Note that every edge of $K_{2,2,2}$ belongs to a $C_4$ in $K_{2,2,2}$, then $uv$ belongs to a $C_4$ in $W$. Let the $C_4$  containing $uv$ in $W$ be $\{x,y,u,v\}$, then $x,y\in V(T)$. Suppose $\{a,b\}=V(W)\setminus \{x,y,u,v\}$, then vertices $a,b$ must be in  $T$. However, $W[\{a,b,x,y\}]$ is a $C_4$ in $T$, which contradicts to that $T$ is $C_4$-free. Then by Theorem \ref{theorem_2}, we have  $$ex_{conn}(n,\{P_k,K_{2,2,2}\})=ex\Big(\frac{k-3}{2},C_4\Big)+\frac{k-3}{2}\Big(n-\frac{k-3}{2}\Big)+1.$$

\section{Proof of Theorem \ref{theorem_3}}

For the lower bound,  it is easily seen that  $ex(n,\{P_k,H\})\geq ex_{conn}(n,\{P_k,H\})=(\lfloor\frac{k}{2}\rfloor-1)n+O_k(1)$ by Theorems \ref{theorem_1} and \ref{theorem_2}.  Moreover, suppose $n=m(k-1)+r$, where $0\leq r\leq k-2$. Then the disjoint union of $m$ copies from  a graph in $EX(k-1,H)$ and a copy from a graph in  $EX(r,H)$ is also $\{P_k,H\}$-free. Hence
 \begin{eqnarray*}
 ex\big(n,\{P_k,H\}\big)&\geq &m\cdot ex(k-1,H)+ex(r,H)\\
 &=&\frac{ex(k-1,H)}{k-1}n+O_k(1).
 \end{eqnarray*}
 So $ex(n,\{H,P_k\})\geq n\max\big\{\lfloor \frac{k}{2}\rfloor-1,\frac{ex(k-1,H)}{k-1}\big\}+O_k(1)$.

Suppose $G$ is an $n$-vertex $\{P_k,H\}$-free graph. Let $G_i\,(1\leq i\leq t)$ be the components of $G$ and $l_i=|V(G_i)|$.

{\em Case 1.} $k$ is even.
 For the upper bound, we distinguish two cases for $l_i$.

 \noindent
 {\em Case 1.1.}  $l_i<k$. Then $e(G_i)\leq ex(l_i,H)\leq \frac{l_i(l_i-1)}{2}\leq \frac{k-2}{2}l_i$.

\noindent
{\em Case 1.2.}  $l_i\geq k$. Let the longest path in $G_i$ be $P_m$, where $m\leq k-1$. Then
\begin{align}
   e(G_i)&\leq ex(m,H)+\Big(\frac{k}{2}-1\Big)(l_i-m) \nonumber\\[2mm]
   &\leq \frac{m(m-1)}{2}+\Big(\frac{k}{2}-1\Big)l_i-\frac{m(k-2)}{2}\nonumber\\[2mm]
   &=\Big(\frac{k}{2}-1\Big)l_i-\frac{m}{2}(m-k+1)\nonumber\\[2mm]
   &\leq \Big(\frac{k}{2}-1\Big)l_i\nonumber.
\end{align}
The first inequality holds by $e(G_i[V(P_m)])\leq ex(m,H)$ and Lemma \ref{lemma_3}.
Therefore,
$$e(G)=\sum_{i=1}^te(G_i)\leq \sum_{i=1}^t\Big(\frac{k}{2}-1\Big)l_i=\Big(\frac{k}{2}-1\Big)n.$$
Then $ex(n,\{H,P_k\})\leq n\cdot\max\big\{\big\lfloor \frac{k}{2}\big\rfloor-1,\frac{ex(k-1,H)}{k-1}\big\}$. Combining with the lower bound, we have
$$ex(n,\{H,P_k\})= n \cdot\max\Big\{\Big\lfloor \frac{k}{2}\Big\rfloor-1,\frac{ex(k-1,H)}{k-1}\Big\}+O_k(1).$$

 {\em Case 2.} $k$ is odd. For the upper bound, we distinguish three different cases for $l_i$.

\noindent
{\em Case 2.1.}  $l_i<k-1$. Then $e(G_i)\leq ex(l_i,H)\leq \frac{l_i(l_i-1)}{2}\leq \frac{k-3}{2}l_i$.

\noindent
{\em Case 2.2.}  $l_i=k-1$. Then $e(G_i)\leq ex(k-1,H)$.

\noindent
{\em Case 2.3.}  $l_i\geq k$. Let the longest path in $G_i$ be $P_m$ where $m\leq k-1$.

 If $P_{k-1}\nsubseteq G_i$, then $m\leq k-2$ and
\begin{align}
   e(G_i)&\leq ex(m,H)+\frac{k-3}{2}(l_i-m) \nonumber\\[2mm]
   &\leq \frac{m(m-1)}{2}+\frac{k-3}{2}l_i-\frac{m(k-3)}{2}\nonumber\\[2mm]
   &=\frac{k-3}{2}l_i-\frac{m}{2}(m-k+2)\nonumber\\[2mm]
   &\leq \frac{k-3}{2}l_i\nonumber.
\end{align}
The first inequality holds by $e(G_i[V(P_m)])\leq ex(m,H)$ and Lemma \ref{lemma_3}.

\noindent
If $P_{k-1}\subseteq G_i$, then
\begin{eqnarray*}
e(G_i)&\leq &ex(k-1,H)+\frac{k-3}{2}(l_i-k+1)\\[2mm]
&=& \frac{k-3}{2}l_i+ex(k-1,H)-\frac{(k-1)(k-3)}{2}.
\end{eqnarray*}
The first inequality holds by $e(G_i[V(P_{k-1})])\leq ex(k-1,H)$
 and Lemma \ref{lemma_3}.

Let $n_1:=\sum_{l_j<k-1} l_j$, $n_2:=\sum_{l_j=k-1}l_j$, $n_3:=\sum_{l_j\geq k,P_{k-1}\nsubseteq G_j}l_j$ and $n_4:=\sum_{l_j\geq k,P_{k-1}\subseteq G_j}l_j$. Then we have
\begin{align}
   e(G)&=\frac{k-3}{2}n_1+\frac{n_2}{k-1}ex(k-1,H)+\sum_{l_i\geq k,P_{k-1}\nsubseteq G_i}e(G_i)+\sum_{l_j\geq k,P_{k-1}\subseteq G_j}e(G_j)\nonumber\\[2mm]
   &\leq \frac{k-3}{2}n_1+\frac{n_2}{k-1}ex(k-1,H)+\frac{k-3}{2}n_3+\frac{k-3}{2}n_4+\frac{n_4}{k}
   \Big(ex(k-1,H)-\frac{(k-1)(k-3)}{2}\Big)\nonumber\\[2mm]
   &\leq \frac{k-3}{2}n_1+\frac{n_2}{k-1}ex(k-1,H)+\frac{k-3}{2}n_3+\frac{k-3}{2}n_4+
   \frac{n_4}{k-1}\Big(ex(k-1,H)-\frac{(k-1)(k-3)}{2}\Big)\nonumber\\[2mm]
   &=\frac{k-3}{2}(n_1+n_3+n_4)+\frac{n_2+n_4}{k-1}ex(k-1,H)-\frac{k-3}{2}n_4\nonumber\\[2mm]
   &=\frac{k-3}{2}(n_1+n_3)+\frac{ex(k-1,H)}{k-1}(n_2+n_4).\nonumber
\end{align}
Let $x=n_1+n_3$ and $y=n_2+n_4$. Then $x+y=n$ and $e(G)\leq \frac{k-3}{2}x+\frac{ex(k-1,H)}{k-1}y$. Hence
$$e(G)\leq n\cdot\max\Big\{\frac{k-3}{2},\frac{ex(k-1,H)}{k-1}\Big\}.$$
Then $ex(n,\{H,P_k\})\leq n\cdot\max\big\{\big\lfloor \frac{k}{2}\big\rfloor-1,\frac{ex(k-1,H)}{k-1}\big\}$. Combining with the lower bound, we have
$$ex(n,\{H,P_k\})= n \cdot\max\Big\{\Big\lfloor \frac{k}{2}\Big\rfloor-1,\frac{ex(k-1,H)}{k-1}\Big\}+O_k(1).$$
The proof is finished. \qed

\section{Concluding Remarks}
Theorem \ref{theorem_4} gives a sufficient condition for $c=0$ in Theorem \ref{theorem_2}
and we also give an example to show $c$ can be $1$ in Theorem \ref{theorem_2}. It is natural to ask whether we can determine when $c$ is $0$ or $1$ exactly in Theorem \ref{theorem_2}.

We obtain asymptotical result of $ex(n,\{P_k,H\})$ with the constant term $O_k(1)$ left,  it is an interesting question to give the exact value of $ex(n,\{P_k,H\})$.

\section*{Declaration of interests}
The authors declare that there is no conflict of interest.


\begin{thebibliography}{6}
   \expandafter\ifx\csname natexlab\endcsname\relax\def\natexlab#1{#1}\fi
   \providecommand{\bibinfo}[2]{#2}
   \ifx\xfnm\relax \def\xfnm[#1]{\unskip,\space#1}\fi
   \bibitem[1]{1972-Abbott-Intersection}
   \bibinfo{author}{H. L. Abbott}, \bibinfo{author}{D. Hanson},
   \bibinfo{author}{N. Sauer},
   \newblock \bibinfo{title}{Intersection theorems for systems of sets},
   \newblock \bibinfo{journal}{Journal of Combinatorial Theory, Series A},
   \bibinfo{volume}{12} (\bibinfo{year}{1972}) \bibinfo{pages}{381--389}.

  \bibitem[2]{2008-balister-p4487}
   \bibinfo{author}{P. N. Balister}, \bibinfo{author}{E. Gy{\H{o}}ri},
   \bibinfo{author}{J. Lehel}, \bibinfo{author}{R. H. Schelp},
   \newblock \bibinfo{title}{Connected graphs without long paths},
   \newblock \bibinfo{journal}{Discrete Mathematics},
   \bibinfo{volume}{308} (\bibinfo{year}{2008}) \bibinfo{pages}{4487--4494}.
    \bibitem[3]{1976-Chvatal-degrees}
   \bibinfo{author}{V. Chv{\'a}tal}, \bibinfo{author}{D. Hanson},
   \newblock \bibinfo{title}{Degrees and matchings},
   \newblock \bibinfo{journal}{Journal of Combinatorial Theory, Series B},
   \bibinfo{volume}{20} (\bibinfo{year}{1976}) \bibinfo{pages}{128--138}.



   \bibitem[4]{1959-erdos-p337}
   \bibinfo{author}{P. Erd{\H{o}}s}, \bibinfo{author}{T. Gallai},
   \newblock \bibinfo{title}{On maximal paths and circuits of graphs},
   \newblock \bibinfo{journal}{Acta Mathematica Academiae Scientiarum Hungarica},
   \bibinfo{volume}{10} (\bibinfo{year}{1959}) \bibinfo{pages}{337--356}.
   \bibitem[5]{2023-Katona-withoutlongpath}
   \bibinfo{author}{G. O. H. Katona}, \bibinfo{author}{C.~Q. Xiao},
   \newblock \bibinfo{title}{Extremal graphs without long paths and large cliques},
   \newblock \bibinfo{journal}{European Journal of Combinatorics},
   (\bibinfo{year}{2023}) \bibinfo{pages}{103807}.
   \bibitem[6]{1977-kopylov-maximal}
   \bibinfo{author}{G. N. Kopylov},
   \newblock \bibinfo{title}{On maximal paths and cycles in a graph},
   \newblock \bibinfo{journal}{Doklady Akademii Nauk},
   \bibinfo{volume}{234} (\bibinfo{year}{1977}) \bibinfo{pages}{19--21}.
\end{thebibliography}
\end{document}